\documentclass{amsart}

\usepackage{amsmath}
\usepackage{amssymb}
\usepackage{mathrsfs}
\usepackage{amsthm}
\usepackage{comment}
\usepackage{color}

\title[A little more on $ \Gamma(R) $ and $ \AG(R) $]{A little more on the zero-divisor graph and the annihilating-ideal graph of a reduced ring}

\author{Mehdi Badie}

\keywords{Zero-divisor graph, Annihilating-ideal graph, Radius, Fixed-place ideal, Ring of real-valued continuous functions, Domination number, Retract, Zariski-topology}

\subjclass[2010]{13A99,13A05,54C40}
 
\theoremstyle{plain}
\newtheorem{Thm}{Theorem}[section]
\newtheorem{Lem}[Thm]{Lemma}

\newtheorem{Pro}[Thm]{Proposition}
\newtheorem{Cor}[Thm]{Corollary}

\newcommand{\AG}{\mathbb{AG}}
\newcommand{\AC}{\A{X}}
\newcommand{\AGC}{\mathbb{AG}(X)}
\newcommand{\A}[1]{\mathbb{A}(#1)^*}
\newcommand{\ff}{if and only if }
\newcommand{\Mi}{\mathrm{Min}}

\newcommand{\An}{\mathrm{Ann}}

\newcommand{\Sp}{\mathrm{Spec}}

\newcommand{\Ec}{\mathrm{ecc}}

\newcommand{\Ra}{\mathrm{Rad}}

\newcommand{\B}{\mathcal{B}}

\newcommand{\gi}{\mathrm{gi}}
\newcommand{\dt}{\mathrm{dt}}
\newcommand{\Ge}[1]{\big< #1 \big>}
\newcommand{\sz}{{sz^\circ}}

\newcommand{\R}{\mathbb{R}}
\newcommand{\N}{\mathbb{N}}
\newcommand{\Z}{\mathbb{Z}}

\begin{document}

\begin{abstract}
We have tried to translate some graph properties of $ \AG(R) $ and $ \Gamma(R) $ to the topological properties of Zariski topology. We prove that $ \Ra(\Gamma(R))  $ and $ \Ra(\AG(R)) $ are equal and they are equal to 3, \ff the zero ideal of $ R $ is an anti fixed-place ideal, \ff $ \Mi(R) $ does not have any isolated point, \ff $ \Gamma(R) $ is triangulated, \ff $ \AG(R) $ is triangulated. Also, we show that if the zero ideal of a ring $ R $ is a fixed-place ideal, then $ \dt_t(\AG(R)) = |\B(R)| $ and also if in addition $ |\Mi(R)| > 2 $, then $ \dt(\AG(R)) = |\B(R)| $. Finally, it has been shown that $ \dt(\AG(R)) $ is finite, \ff $ \dt_t(\AG(R) $ is finite; \ff $ \Mi(R) $ is finite. 
\end{abstract}

\maketitle

\section{Introduction}

Let $ R $ be a commutative ring with unity. By $\Sp(R)$  we mean the set of all prime ideals of $R$. A‌ \emph{semi-prime ideal} means an ideal which is an intersection of prime ideals. $ R $ is called a \emph{reduced ring}, if the zero ideal of $ R $ is semi-prime. Through this paper $ R $ is the \emph{commutative unitary reduced ring}. For each ideal $ I $ of $ R $ and each subset $ S $ of $ R $, we denote the ideal $ \{ x \in R: Sx \subseteq I\}$ by $(I:S)$. When $I = \{ 0 \}$ we write $ \An(S) $ instead of $(\{0\}:S)$ and call it the \emph{annihilator} of $S$. Also we write $ \An(a) $ instead of $ \An(\{a\}) $. A prime ideal $ P $ is said to be a \emph{minimal prime ideal} over an ideal $ I $ if there are not any prime ideal strictly contained in $ P $ that contains $ I $. By $ \Mi(I) $ we mean the set of all minimal prime ideals over $ I $; We use $ \Mi(R) $ instead of $ \Mi(\{0\}) $.  A prime ideal $ P $ is called a \emph{Bourbaki associated prime divisor} of an ideal $ I $ if $ (I‌: x)=P$, for some $ x \in R $. We denote the set of all Bourbaki associated prime divisors of an ideal $ I $ by $ \mathcal{B}(I) $. It is easy to see that $\mathcal{B}(I) \subseteq \Mi(I)$, for any ideal $I$ of a ring $R$. We use $\mathcal{B}(R)$ instead of $ \mathcal{B}(\{0\}) $.  Let $ I $ be a semi-prime ideal, $ P_\circ \in \Mi(I) $ is called \emph{irredundant} with respect to $ I $  if $ I \neq \bigcap_{ P_\circ \neq P \in \Mi(I) } P$. If $ I $ is equal to the intersection of all irredundant ideals with respect to $ I $, then we call it a \emph{fixed-place ideal}, exactly, by \cite[Theorem 2.1]{aliabad2013fixed}, we have $ I = \bigcap \mathcal{B}(I) $. If $ \mathcal{B}(I) = \emptyset $, then $ I $ is called an \emph{anti-fixed place ideal}. We use $ \mathcal{B}(R) $ instead of $ \mathcal{B}(\{0\}) $. For more information about the fixed-place ideals and anti fixed-place ideals, see \cite{aliabad2013fixed,aliabad2018bourbaki}.

Let $ G = \big< V(G) , E(G) \big> $ be an undirected graph.   A vertex is called a \emph{pendant vertex} if it is adjacent to just one vertex . For each pair of vertices $ u $ and $ v $ in $ V(G) $, the length of the shortest path between $ u $ and $ v $, is denoted by $ d(u,v) $, is called the \emph{distance} between $ u $ and $ v $. The \emph{eccentricity} of a vertex $ u $ of $ G $ is denoted by $ \Ec(u) $ and is defined to be maximum of $ \{ d(u,v) : u \in G \} $. The minimum of $ \{ \Ec(u) : u \in G \} $, denoted by $ \Ra(G) $, is called the \emph{radius} of $G$. We say $ G $ is \emph{triangulated}  if each vertex of $ G $ is vertex of some triangle. Two vertices $ u $ and $ v $ are called \emph{orthogonal}, if $ u $ and $ v $ are adjacent and there are not any vertex which is adjacent to the both vertices $ u $ and $ v $.   A \emph{graph homomorphism} $ \varphi $  from a graph $ G = \big< V(G), E(G) \big>$ to a graph $ H = \big< V(H), E(H) \big>$, is a map from $V(G)$ to $V(H)$ that $ \{ u, v \} \in E(G)$ implies $ \{ f(u) , f(v) \} \in E(H)$, for all pairs of vertices $u, v \in V(G)$.  A \emph{retraction} is a homomorphism $ \varphi $ from a graph $ G $ to a subgraph $ H $ of $ G $ such that $ \varphi (v) = v $, for each vertex $ v  \in V(H)‌$. In this case the subgraph $‌ H $ is called a \emph{retract} of $ G$. A subset $ D $ of vertex of a graph is called a \emph{dominating set} if every vertex of graph is either in $ D $ or adjacent to some vertex of $ D $. Also, a \emph{total dominating set} of a graph is a family $ S $ of vertex of graph such that every vertex is adjacent to some vertex of $ S $. The \emph{dominating number} and \emph{total dominating number} of a graph is the minimum cardinality of dominating set and total dominating set of graph, respectively. We denote the dominating number and total dominating number of a graph $ G $ by $ \dt(G) $ and $ \dt_t(G) $, respectively. For every $ u, v \in V(G) $, we denote the length of the shortest cycle containing $ u $ and $ v $ by $ gi(u,v) $.  

Suppose $ I $ and $ a $ are an ideal  and element of $ R $, respectively. If $ \An(I) \neq \{ 0 \} $, then $ I $  is called \emph{annihilating-ideal} and if $ \An(a) \neq \{ 0 \} $, then $ a $ is called a \emph{zero-divisor element}. Let $ \A{R} $ be the family of all non-zero annihilating-ideals and $ Z(R)^* $ be the family of all non-zero zero-divisor element of $ R $. $ \AG(R) $ is a graph with the vertices $ \A{R} $, and two distinct vertices $ I $ and $ J $ are adjacent, if $ IJ = \{ 0 \} $.  Also, $ \Gamma(R) $ is a graph with vertices $ Z(R)^* $, and  two distinct vertices $ a $ and $ b $ are adjacent, if $ ab= 0 $. $ \AG(R) $ and $ \Gamma(R) $ are called the \emph{annihilating-ideal graph} and the \emph{zero-divisor graph} of $ R $, respectively.

Thorough this paper, all $ Y \subseteq \Sp(R) $ is considered by Zariski topology; i.e.,  by assuming as a base for the closed sets of $Y$, the sets $h_Y(a)$ where $ h_Y(a) = \{ P \in Y: a \in P \} $. Hence, the closed sets of $Y$ are of the form $ h_Y(I)=\bigcap_{a\in I} h_Y(a) = \{P\in Y: I \subseteq P \} $, for some ideal $ I $ in $ R $. Also, we set $ h_Y^c(I) = Y \setminus h_Y(I) $. When $Y=\Mi(R)$ we write  $h_m$ instead of $h_Y$. A point $ P \in \Sp(R) $ is called a \emph{quasi-isolated} point, if $ P $ is an isolated point of $ \Mi(R) $. By \cite[Theorem 2.3 and Corollary 2.4]{henriksen1965space}, the space $ \Mi(R) $ is a Hausdorff space in which $ \{ h_m(a) :‌ a \in R \} $ is base of clopen sets.

In this research, $ C(X) $ denotes the ring of all real-valued continuous functions on a Tychonoff space $ X $ and we abbreviate $ \A{C(X)} $ and $ \AG(C(X)) $ by $ \AC $ and $ \AGC $, respectively. We denote the set of all isolated point of $ X $, by $ I(X) $. A space $ X $ is called almost discrete, if $ \overline{I(X)} = X $.

The reader is referred to \cite{atiyah1969introduction,sharp2000steps,stephen1970general,gillman1960rings,bondy1976m} for undefined terms and notations.

The researchers tried to define a graph illustration for some kind of mathematical aspects. For example \cite{afkhami2014graph} in the lattice literature, \cite{assari2016graphs} in the measure literature, \cite{badie2020annihilating} in topology literature and \cite{assasri2020zero} in the linear algebra. The study of translating graph properties to algebraic properties is an interesting subject for mathematicians. The introducing and studying of the concept of zero-divisor graph of a commutative is started in \cite{beck1988coloring}. In this article the author let all elements of the commutative ring be vertices of the this graph. In \cite{anderson1999zero}, it has been studied the zero-divisor graph whose vertices are the non-zero zero-divisor elements. Studying of this graph has been continued in several articles; see \cite{levy2002zero,anderson2003zero,akbari2003zero,akbari2004zero,redmond2006central,samei2007zero}. Also, First the annihilating-ideal graph has been introduced and studied in \cite{behboodi2011annihilating} and then it has been studied in several articles; see \cite{behboodi2011annihilating2,aliniaeifard2012rings,aalipour2012coloring,aalipour2014classification,nikandish2014dominating,chelvam2015connectivity,nikandish2015domination}.

In the rest of this section we give  a retract of the annihilating graph. Section 2, devoted to translating the graph properties of these graphs to Zariski topology. Also, we note an impossible assumption in \cite{samei2007zero}. In Section 3, by obtained tools in Section 2, we characterize the radius of $ \Gamma(R) $, $ \AG(R) $, $ \Gamma(X) $ and $ \AG(X) $ and show that $ \Ra(\Gamma(R))  $ and $ \Ra(\AG(R)) $ are equal and they are equal to 3, \ff the zero ideal of $ R $ is an anti fixed-place ideal, \ff $ \Mi(R) $ does not have any isolated point, \ff $ \Gamma(R) $ is triangulated, \ff $ \AG(R) $ is triangulated. In the last section, the domination number of the annihilating-ideal graph has been studied. In this section we show that $ |\B(R)| \leqslant \dt(AG(R)) $. Also, we note a mistake of \cite{nikandish2015domination} and we characterize the domination of a ring in which the zero ideal is a fixed-place ideal and domination of $ \AG(X) $ in which $ X $ is almost discrete and finally we prove that $ \dt(\AG(R)) $ is finite, \ff $ \dt_t(\AG(R)) $ is finite; \ff $ \Mi(R) $ is finite. 

For each subset $ S $ of $ R $ let $ P_{_S} $ be the intersection of all minimal prime ideals containing $ S $. An ideal $ I $ in $ R $ is said to be \emph{strongly $ z^\circ $-ideal} (or briefly \emph{$ sz^\circ $-ideal}) if $ P_{_F} \subseteq I $, for every finite subset $ F $ of $ I $. Since the intersection of every family of strong $ z^\circ $-ideals is a strong $ z^\circ $-ideal, the smallest strong $ z^\circ $-ideal containing an ideal $ I $ exists, and we denote this by $ I_{sz^\circ} $. For more details about the strong $ z^\circ $-ideals, see \cite{mason1989prime,aliabad2011sz,badie2019extension}. 

\begin{Lem}
	Let $ I $ and $ J $ be ideals of $ R $. $ I $ is adjacent to $ J $, \ff $ I_\sz $ is adjacent to $ J_\sz $.
	\label{I_sz0 is adjacent to J_sz0}
\end{Lem}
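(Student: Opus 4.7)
The plan is to show that the adjacency condition $IJ=\{0\}$ is preserved when passing from $I,J$ to $I_\sz, J_\sz$, by translating everything into the Zariski topology on $\Mi(R)$. The key observation I will exploit is that $R$ is reduced, so $\bigcap \Mi(R)=\{0\}$; consequently for any ideals $A,B$ we have $AB=\{0\}$ iff $AB\subseteq P$ for every $P\in\Mi(R)$ iff $h_m(A)\cup h_m(B)=\Mi(R)$ (using primeness of each $P$).

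The main technical step is to verify $h_m(I)=h_m(I_\sz)$ for every ideal $I$. The inclusion $\supseteq$ is trivial since $I\subseteq I_\sz$. For $\subseteq$, suppose $P\in\Mi(R)$ contains $I$; I claim $P$ itself is an $\sz$-ideal, for if $F\subseteq P$ is finite then $P$ belongs to the collection of minimal primes containing $F$, so $P_F\subseteq P$. Hence $P$ is a $\sz$-ideal containing $I$, and by minimality of $I_\sz$ we get $I_\sz\subseteq P$. Combining this with the topological reformulation above yields the equivalence $IJ=\{0\}\iff I_\sz J_\sz=\{0\}$.

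Next I handle the remaining parts of the definition of adjacency in $\AG(R)$. For the vertex condition $I\in\A{R}$, I use the same topological reformulation to see that $a\in\An(I)$ iff $aI=\{0\}$ iff $h_m(a)\cup h_m(I)=\Mi(R)$ iff $aI_\sz=\{0\}$ iff $a\in\An(I_\sz)$; thus $\An(I)=\An(I_\sz)$, so $I$ is a nonzero annihilating ideal iff $I_\sz$ is (using also that $I=\{0\}$ iff $I_\sz=\{0\}$, the nontrivial direction being $I\subseteq I_\sz$).

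Finally I address distinctness, which is the only subtle point, as two distinct ideals can collapse to the same $\sz$-closure. Suppose $I,J\in\A{R}$ are distinct and $IJ=\{0\}$; if we had $I_\sz=J_\sz$ then the previous step gives $I_\sz^2=\{0\}$, forcing $I_\sz=\{0\}$ by reducedness, and hence $I=J=\{0\}$, contradicting $I,J\in\A{R}$. The reverse implication is immediate because $I=J$ obviously entails $I_\sz=J_\sz$. The expected obstacle was precisely this distinctness issue, and as shown it is resolved by reducedness; no further calculation is required.
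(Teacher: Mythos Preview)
Your argument is correct and, in fact, more thorough than the paper's. Both proofs translate the product condition into the Zariski topology on $\Mi(R)$, but the routes differ. The paper proceeds element by element: given $a\in I_\sz$, $b\in J_\sz$, it invokes an external characterization (\cite[Proposition~7.5]{badie2019extension}) to find finite subsets $F\subseteq I$, $G\subseteq J$ with $h_m(F)\subseteq h_m(a)$, $h_m(G)\subseteq h_m(b)$, and then computes $h_m(ab)=\Mi(R)$. You instead establish the global identity $h_m(I)=h_m(I_\sz)$ directly from the observation that every $P\in\Mi(R)$ is itself an $\sz$-ideal, and then use the reduced-ring equivalence $AB=\{0\}\iff h_m(A)\cup h_m(B)=\Mi(R)$. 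Your route is self-contained (no citation needed), and the intermediate fact $h_m(I)=h_m(I_\sz)$ is a reusable statement. Moreover, you explicitly verify the vertex condition $\An(I)=\An(I_\sz)$ and the distinctness $I_\sz\neq J_\sz$ (via reducedness forcing $I_\sz^2=0\Rightarrow I_\sz=0$), points the paper's proof leaves implicit and defers to the surrounding proposition.
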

\begin{proof}
	$ \Rightarrow $). Suppose that $ a \in I_\sz $ and $ b \in J_\sz $, then , by \cite[Proposition 7.5]{badie2019extension},  finite subsets $ F $ of $ I $ and $ G $ of $ J $ exist such that $ h_m(G) \subseteq h_m(a) $ and $ h_m(H) \subseteq h_m(b) $. Since $ I $ is adjacent to $ J $, $ IJ = \{ 0 \} $,  so $ GH = \{ 0 \} $, this implies that $  Min(R) = h_m(GH) = h_m(G) \cup h_m(H) \subseteq h_m(a) \cup h_m(b) = h_m(ab) $, thus $ h_m(ab) = \Mi(R) $, hence $ ab \in kh_m(ab) = \{ 0 \} $, and therefore $ ab = 0 $. This shows that $ I_\sz J_\sz = \{ 0 \} $ and therefore $ I_\sz $ is adjacent to $ J_\sz $.
	
	$ \Leftarrow $). It is clear.
\end{proof}

\begin{Pro}
	The family of all $ \sz $-ideals of $ \A{R} $ is a retract of $ \AG(R) $.
\end{Pro}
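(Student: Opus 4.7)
The plan is to define the retraction $\varphi:\AG(R) \to H$, where $H$ denotes the subgraph of $\AG(R)$ induced by all $\sz$-ideals in $\A{R}$, by the rule $\varphi(I) = I_\sz$. The content of the proof then breaks into four checks: well-definedness (the image really lies in $V(H) \subseteq \A{R}$), the graph-homomorphism property, fixedness on $H$, and the implicit requirement that adjacencies are between \emph{distinct} vertices. The previous lemma does almost all of the work; the slightly delicate point is the last one, where reducedness of $R$ is needed.

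First I would check that $I_\sz \in \A{R}$ whenever $I \in \A{R}$. Non-vanishing is immediate since $I \subseteq I_\sz$ and $I \neq \{0\}$. To see that $I_\sz$ is an annihilating ideal, pick $J \in \A{R}$ with $IJ = \{0\}$; by Lemma \ref{I_sz0 is adjacent to J_sz0} we obtain $I_\sz J_\sz = \{0\}$, and since $J_\sz \supseteq J \neq \{0\}$ this produces a non-zero annihilator of $I_\sz$. Thus $\varphi$ really maps into $V(H)$. Fixedness on $H$ is then a tautology: if $I$ is already an $\sz$-ideal, then $I$ is the smallest $\sz$-ideal containing itself, so $\varphi(I) = I_\sz = I$.

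Next I would verify the homomorphism property. Suppose $I$ and $J$ are adjacent in $\AG(R)$, so $I \neq J$ and $IJ = \{0\}$. Lemma \ref{I_sz0 is adjacent to J_sz0} gives $I_\sz J_\sz = \{0\}$, so the only thing that can fail is $I_\sz = J_\sz$. The main (small) obstacle is ruling this out, and here I would use that $R$ is reduced: if $K := I_\sz = J_\sz$, then $K^2 = I_\sz J_\sz = \{0\}$, so every $x \in K$ satisfies $x^2 = 0$ and hence $x = 0$ by reducedness, forcing $K = \{0\}$. This contradicts $K \supseteq I \neq \{0\}$. Therefore $I_\sz \neq J_\sz$, and $\varphi(I)$ is adjacent to $\varphi(J)$ in $H$.

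Combining these facts, $\varphi$ is a graph homomorphism from $\AG(R)$ to its subgraph $H$ with $\varphi|_{V(H)} = \mathrm{id}_{V(H)}$, so $H$ is a retract of $\AG(R)$, as claimed.
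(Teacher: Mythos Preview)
Your proof is correct and follows essentially the same approach as the paper: both define the retraction by $\varphi(I) = I_\sz$ and rely on Lemma~\ref{I_sz0 is adjacent to J_sz0} for the homomorphism property and for showing $I_\sz \in \A{R}$. Your argument is in fact more careful than the paper's, since you explicitly verify fixedness on $H$ and, more importantly, you rule out $I_\sz = J_\sz$ using reducedness---a point the paper's proof (and its supporting lemma) glosses over.
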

\begin{proof}
	Suppose that $ I \in \A{R} $, so $ J \in \A{R} $ exists such that $ IJ = \{ 0 \} $. By Lemma \ref{I_sz0 is adjacent to J_sz0}, $ I_\sz $ is adjacent to $ J_\sz $. Since $ 0 \neq I \subseteq I_\sz \subseteq \An(J_\sz) \subseteq \An(J) \neq X   $, $ I_\sz \in \A{R} $. This shows that the map $ \varphi $ from $ \A{R} $ to the family of all $ \sz $-ideals of $ \A{R} $, defined by $ \varphi(I) = I_\sz $ is a retraction and therefore the family of all $ \sz $-ideals of $ \A{R} $ is a retract of $ \AG(R) $.
\end{proof}

\section{Zariski topology}

In this section we give Zariski topological characterization of elements of $ \Gamma(R) $ and $ \AG(R) $, then we characterize the adjacency, distance, orthogonality, eccentricity and triangulation of vertices of these graphs. Also, it has been shown that $ \Ra\Gamma(R),\Ra\AG(R) > 1 $.   

\begin{Pro}
	Let $ Y \subseteq \Sp(R) $ and $ \bigcap Y = \{ 0 \} $. If $ a $ is an element and $ I $ is an ideal of $ R $, then
	\begin{itemize}
		\item[(a)] $ a = 0 $, \ff $ h_Y(a) = Y $.
		\item[(b)] $ \An(a) \neq 0 $, \ff $ \overline{h_Y^c(a)} \neq Y $.
		\item[(c)] $ I = \{ 0 \} $, \ff $ h_Y(I) = Y $.
		\item[(d)] $ I $  is an annihilating-ideal, \ff $ \overline{h_Y^c(I)} \neq Y $. 
	\end{itemize}
\label{elements}
\end{Pro}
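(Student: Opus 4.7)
\medskip

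\noindent\textbf{Plan.} The four parts split naturally into two pairs: (a) and (c) are the ``trivial'' direction coming directly from the hypothesis $\bigcap Y = \{0\}$, while (b) and (d) both reduce, via a closure identification, to (c).

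For (c) the plan is immediate: if $I = \{0\}$ then $I \subseteq P$ for every $P$, so $h_Y(I) = Y$; conversely, $h_Y(I) = Y$ says $I \subseteq P$ for all $P \in Y$, so $I \subseteq \bigcap Y = \{0\}$. Part (a) is the special case $I = (a)$, since $h_Y(a) = h_Y((a))$.

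For (b) and (d) the key step is the identification
\[
\overline{h_Y^c(I)} \;=\; h_Y(\An(I)),
\]
and likewise $\overline{h_Y^c(a)} = h_Y(\An(a))$. To establish this I would use the fact that the closed sets of $Y$ are exactly the sets $h_Y(J)$ for ideals $J$ of $R$, so the closure of $h_Y^c(I)$ is $h_Y(J_\circ)$ where $J_\circ$ is the largest ideal with $h_Y^c(I) \subseteq h_Y(J_\circ)$. The containment $h_Y^c(I) \subseteq h_Y(J)$ unfolds to: for every $P \in Y$ and every $a \in I$, $a \notin P \Rightarrow J \subseteq P$, equivalently $Ja \subseteq \bigcap\{P : a \notin P\}$; combining over $a \in I$ and using $\bigcap Y = \{0\}$ (applied prime-by-prime), this is equivalent to $JI = \{0\}$, i.e.\ $J \subseteq \An(I)$. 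Hence the maximal such $J$ is $\An(I)$ itself, giving the stated equality.

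Once this identification is in hand, (d) follows by chaining: $I$ is an annihilating-ideal iff $\An(I) \neq \{0\}$ iff (by (c)) $h_Y(\An(I)) \neq Y$ iff $\overline{h_Y^c(I)} \neq Y$; and (b) is the same chain with $I = (a)$, noting $\An((a)) = \An(a)$. The main (minor) obstacle is just being careful with the direction $\overline{h_Y^c(I)} \supseteq h_Y(\An(I))$, which requires checking that no strictly smaller basic closed set $h_Y(b)$ sandwiched between the two can occur; this is exactly what the ``largest $J$'' characterization above rules out, so no surprises are expected.
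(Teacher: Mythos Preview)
Your proposal is correct and follows essentially the same route as the paper. The paper's proof is terser, writing $\An(a) = k\,h_Y^c(a)$ (where $k$ denotes the kernel, i.e.\ intersection, of a set of primes) and then using the standard identity $\overline{S} = h_Y k(S)$ to get $\overline{h_Y^c(a)} = h_Y(\An(a))$ in one line; your ``largest $J$'' argument is exactly an unpacking of this hull--kernel computation, after which both proofs reduce (b) and (d) to (c) in the same way.
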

\begin{proof}
	(a) and (c). Since $ \bigcap Y = \{ 0 \} $, They are clear.
	
	(b). Since $ \An(a) = k h_Y^c(a) $, $ \An(a) \neq \{ 0 \} $ \ff $ kh_Y^c (a) \neq \{ 0 \} $; and it is equivalent to say that $ hkh_Y^c(a) \neq Y $, because $ \bigcap Y = \{ 0 \} $, and therefore it is equivalent to $ \overline{h_Y^c(I)} \neq Y $.
	
	(d). The proof is analogously similar to the proof part (b).
\end{proof}

\begin{Lem}
	Let $ Y \subseteq \Sp(R) $ and $ \bigcap Y = \{ 0 \} $.
	\begin{itemize}
		\item[(a)] For each $ a,b \in Z(R)^* $, $ a $ is adjacent to $ b $, \ff $ h_Y^c(a) \cap h_Y^c(b) = \emptyset $.
		\item[(b)] For each $ I,J \in \A{R} $, $ I $ is adjacent to $ J $, \ff $ h_Y^c(I) \cap h_Y^c(J) = \emptyset $.
	\end{itemize}
\label{adjacent}
\end{Lem}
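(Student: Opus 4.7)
The plan is to reduce both parts to Proposition \ref{elements} using two elementary identities about the Zariski hull operator on sets of primes, namely
\[
h_Y(ab) = h_Y(a) \cup h_Y(b) \quad \text{and} \quad h_Y(IJ) = h_Y(I) \cup h_Y(J),
\]
each of which follows from the fact that every $P \in Y$ is a prime ideal: $ab \in P$ iff $a \in P$ or $b \in P$, and $IJ \subseteq P$ iff $I \subseteq P$ or $J \subseteq P$. These identities are standard enough that I would just cite them as consequences of primality.

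For part (a), I would argue as follows. By definition, $a$ is adjacent to $b$ in $\Gamma(R)$ precisely when $ab = 0$ (recall $a \neq b$ since they are distinct vertices, but this plays no role). By Proposition \ref{elements}(a) applied to the element $ab$, this is equivalent to $h_Y(ab) = Y$. Substituting the identity $h_Y(ab) = h_Y(a) \cup h_Y(b)$ and taking complements in $Y$ via De Morgan, this in turn is equivalent to $h_Y^c(a) \cap h_Y^c(b) = \emptyset$.

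For part (b), I would proceed analogously: $I$ is adjacent to $J$ in $\AG(R)$ iff $IJ = \{0\}$, which by Proposition \ref{elements}(c) applied to the ideal $IJ$ is equivalent to $h_Y(IJ) = Y$; using the second identity and De Morgan again, this amounts to $h_Y^c(I) \cap h_Y^c(J) = \emptyset$.

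There is no real obstacle here; the only thing to be careful about is invoking the correct parts of Proposition \ref{elements} (part (a) for elements, part (c) for ideals) and checking that the hypothesis $\bigcap Y = \{0\}$ is in force so that the proposition applies. The lemma is essentially a direct translation, via primality and De Morgan, of the ``vanishing'' characterisations already established.
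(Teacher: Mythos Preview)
Your argument is correct and is exactly the kind of unpacking the paper has in mind: the paper's own proof is simply ``It is evident,'' and what you have written is the natural justification of that claim via primality, Proposition~\ref{elements}, and De~Morgan. There is nothing to add or correct.
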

\begin{proof}
	It is evident.
\end{proof}

In \cite[Proposition 2.2]{samei2007zero}, the concept of distance in $ \Gamma(R) $ has characterized by Zariski topology on $ \Sp(R) $. In the following proposition we generalize this characterization by every reduced family of prime ideals and also we characterize the concept of distance in $ \AG $.

\begin{Pro}
	Let $ I, J \in \A{R} $, $ a,b \in Z(R)^* $, $ Y \subseteq \Sp(R) $ and $ \bigcap Y = \{ 0 \} $. Then
	\begin{itemize}
		\item[(a)] $ d(a,b) = 1 $, \ff $ h_Y^c(a) \cap h_Y^c(b) = \emptyset $.
		\item[(b)] $ d(a,b) = 2 $, \ff $ h_Y^c(a) \cap h_Y(b) \neq \emptyset $ and $ h_Y^c(a) \cup h_Y^c(b) $ is not dense in $ Y $.
		\item[(c)] $ d(a,b) = 3 $, \ff $ h_Y^c(a) \cap h_Y^c(b) \neq \emptyset $ and $ h_Y^c(a) \cup h_Y^c(b) $ is dense in $ Y $.
		\item[(d)] $ d(I,J) = 1 $, \ff $ h_Y^c(I) \cap h_Y^c(J) = \emptyset $.
		\item[(e)] $ d(I,J) = 2 $, \ff $ h_Y^c(I) \cap h_Y^c(J) \neq \emptyset $ and $ h_Y^c(I) \cup h_Y^c(J) $ is not dense in $ Y $.
		\item[(f)] $ d(I,J) = 3 $, \ff $ h_Y^c(I) \cap h_Y^c(J) \neq \emptyset $ and $ h_Y^c(I) \cup h_Y^c(J) $ is dense in $ Y $.
	\end{itemize}
\label{distance}
\end{Pro}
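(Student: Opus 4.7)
The plan is to reduce every case to Lemma \ref{adjacent}, to the existence of a common neighbour in the graph, and (for distance $3$) to the classical fact that $\Gamma(R)$ and $\AG(R)$ have diameter at most $3$. Parts (a) and (d) are then immediate: $d(a,b)=1$ (resp.\ $d(I,J)=1$) means precisely that the two vertices are adjacent, which is exactly the topological condition supplied by Lemma \ref{adjacent}.

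For (b), I would argue that $d(a,b)=2$ exactly when $a,b$ are non-adjacent and admit a common neighbour $c\in Z(R)^*$ in $\Gamma(R)$. By Lemma \ref{adjacent}, the condition $ac=bc=0$ translates to $h_Y^c(c)\subseteq h_Y(a)\cap h_Y(b)$; reducedness forces $c\neq a,b$ (else $a^2=0$ or $b^2=0$), while $c\in Z(R)^*$ is automatic because $c$ is annihilated by the non-zero element $a$. Hence the existence of a common neighbour is equivalent to the existence of some non-zero $c\in R$ with $h_Y^c(c)\subseteq h_Y(a)\cap h_Y(b)$, and since $\{h_Y^c(c):c\neq 0\}$ is a base for the non-empty opens of $Y$ (by Proposition \ref{elements}(a)), this is in turn equivalent to the open set $h_Y(a)\cap h_Y(b)=Y\setminus(h_Y^c(a)\cup h_Y^c(b))$ having non-empty interior, i.e.\ to $h_Y^c(a)\cup h_Y^c(b)$ not being dense in $Y$. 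Coupling this with the failure of adjacency yields (b). Part (e) proceeds along identical lines with ideals: for one direction one takes $K=(c)$; for the other, any non-zero ideal $K$ with $KI=0=KJ$ is annihilating because $I\neq 0$, distinct from $I,J$ by reducedness, and yields a non-zero element lying in $h_Y^c(K)\subseteq h_Y(I)\cap h_Y(J)$.

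Finally, for (c) and (f) I would invoke the well-known fact that the diameters of $\Gamma(R)$ and $\AG(R)$ are at most $3$. Since (a) and (b) (resp.\ (d) and (e)) characterize the cases $d=1$ and $d=2$, the case $d=3$ is just the conjunction of their negations, which matches the stated criterion exactly. The main bookkeeping obstacle will sit in (b) and (e): verifying that the candidate common neighbour is actually a vertex of the correct graph and is distinct from the two given vertices. These are all consequences of $R$ being reduced and of $a,b$ (or $I,J$) being non-zero, after which everything else is a routine translation between algebra and the Zariski base.
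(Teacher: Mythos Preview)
Your proposal is correct and follows essentially the same route as the paper: Lemma~\ref{adjacent} for (a) and (d), the Zariski base together with a common-neighbour argument for (b) and (e), and the diameter~$\leqslant 3$ result for (c) and (f). One small slip: $h_Y(a)\cap h_Y(b)$ is closed, not ``the open set'', but your use of it having non-empty interior (equivalently, its complement $h_Y^c(a)\cup h_Y^c(b)$ not being dense) is exactly the right condition, so the argument stands.
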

\begin{proof}
	(a) and (d). They are clear, by Lemma \ref{adjacent}.
	
	(b $ \Rightarrow $). By Lemma \ref{adjacent}, $ h_Y^c(a) \cap h_Y^c(b) \neq \emptyset $. By the assumption, there is an ideal $ c \in Z(R)^* $, such that $ c $ is adjacent to the both vertices $ a $ and $ b $. Now Lemma \ref{adjacent}, implies that 
	\[ h_Y^c(a) \cap h_Y^c(c) = h_Y^c(a) \cap h_Y^c(c) = \emptyset \quad \Rightarrow \quad h_Y^c(a) \cup h_Y^c(b) \subseteq h_Y(c) \quad (*) \]
	Since $ c \neq 0 $, by Proposition \ref{elements}, $ h_Y(c) \neq Y  $, and since $ h_Y(c) $ is closed, $(*)$ follows that $ h_Y^c(a) \cup h_Y^c(b) $ is not dense in $ Y $.

	(b $ \Leftarrow $). By part (a), $ d(a,b) > 1 $. Since $ \{ h_Y^c(c) : c \in R \} $ is a base for Zariski topology, by the assumption, there is some $ c \in R $ such that $ h_Y^c(a) \cup h_Y^c(b) \subseteq h_Y(c) \subset Y $, so $ h_Y^c(a) \cap h_Y^c(c) = h_Y^c(a) \cap h_Y^c(c) = \emptyset  $, $ Y \neq h_Y(a) $ and $ \overline{h_Y^c(c)} \neq Y $, thus $ c \in Z(R)^* $ and $ c $ is adjacent to the both vertices $ a $ and $ b $, hence $ d(a,b) = 2 $.

	(c). It deduces from parts (a), (b) and \cite[Theorem 2.2]{anderson1999zero}.
	
	(e). By this fact that $ \{ h_Y^c(K) : K \text{ is an ideal of } R \} $ is a base for Zariski topology, it is similar to part (b)
	
	(f). It concludes from parts (d), (e) and \cite[Theorem 7.1]{behboodi2011annihilating}.
\end{proof}

\begin{Thm}
	Let $ I, J \in \A{R} $, $ a , b \in Z(R)^* $, $ Y \subseteq \Sp(R) $ and $ \bigcap Y = \{ 0 \} $. Then 
	\begin{itemize}
		\item[(a)] Two vertices $ I $ and $ J $ are orthogonal, \ff $ h_Y^c(I) \cap h_Y^c(J) = \emptyset $ and $ \overline{h_Y^c(I) \cup h_Y^c(J)} = Y $.
		\item[(b)] Two vertices $ a $ and $ b $ are orthogonal, \ff $ h_Y^c(a) \cap h_Y^c(b) = \emptyset $ and $ \overline{h_Y^c(a) \cup h_Y^c(b)} = Y $.
	\end{itemize}
\label{orthogonal}
\end{Thm}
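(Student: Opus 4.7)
The plan is to unfold the definition of orthogonality: $u$ and $v$ are orthogonal exactly when they are adjacent and have no common neighbour. The adjacency condition is handled immediately by Lemma \ref{adjacent}, which already says that adjacency corresponds to $h_Y^c(\cdot) \cap h_Y^c(\cdot) = \emptyset$ in both graphs. So the whole content of the theorem is to translate ``no common neighbour'' into the density statement $\overline{h_Y^c(I) \cup h_Y^c(J)} = Y$ (and similarly for $a,b$). I would prove (a) and (b) in parallel; the only real difference is that for $\AG(R)$ the relevant base of Zariski open sets is $\{h_Y^c(K): K \text{ an ideal of } R\}$, while for $\Gamma(R)$ it is the sub-base $\{h_Y^c(c): c \in R\}$.

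For the ``no common neighbour $\Rightarrow$ density'' direction, I would argue by contrapositive. If $\overline{h_Y^c(I) \cup h_Y^c(J)} \neq Y$, then by the Zariski base there is a non-empty basic open $h_Y^c(K)$ disjoint from $h_Y^c(I) \cup h_Y^c(J)$, i.e.\ $h_Y^c(K) \cap h_Y^c(I) = h_Y^c(K) \cap h_Y^c(J) = \emptyset$. Non-emptiness of $h_Y^c(K)$ together with $\bigcap Y = \{0\}$ gives $K \neq \{0\}$, and the inclusions $KI = KJ = \{0\}$ force $K \in \A{R}$ (its annihilator contains the non-zero $I$). Lemma \ref{adjacent}(b) then makes $K$ adjacent to both $I$ and $J$, and $K$ is distinct from $I,J$ because a simple graph has no loops. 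So $I,J$ are not orthogonal. In case (b), one uses the same argument with a basic open of the form $h_Y^c(c)$ for some $c \in R$, and notes $c \in Z(R)^*$ via Proposition \ref{elements}(b).

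For the converse ``density $\Rightarrow$ no common neighbour'', suppose such a neighbour $K$ exists. Lemma \ref{adjacent}(b) gives $h_Y^c(K) \cap h_Y^c(I) = h_Y^c(K) \cap h_Y^c(J) = \emptyset$, hence
\[ h_Y^c(I) \cup h_Y^c(J) \subseteq h_Y(K). \]
Since $h_Y(K)$ is closed, taking closures yields $Y = \overline{h_Y^c(I) \cup h_Y^c(J)} \subseteq h_Y(K)$, so $h_Y(K) = Y$; by Proposition \ref{elements}(c) this forces $K = \{0\}$, contradicting $K \in \A{R}$. The same argument with elements gives (b) via Proposition \ref{elements}(a).

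I do not expect a real obstacle here: the proof is a direct packaging of the adjacency lemma together with the observation that a basic Zariski open set avoiding $h_Y^c(I) \cup h_Y^c(J)$ provides the missing common neighbour, exactly as in the $d=2$ case of Proposition \ref{distance}. The only point requiring a little care is verifying that the candidate $K$ (or $c$) genuinely lies in $\A{R}$ (respectively $Z(R)^*$), which is where one invokes Proposition \ref{elements}.
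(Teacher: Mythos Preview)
Your proposal is correct and follows essentially the same approach as the paper: both directions are handled by combining Lemma~\ref{adjacent} for adjacency with the Zariski-base argument that a non-empty basic open set missing $h_Y^c(I)\cup h_Y^c(J)$ supplies (or, conversely, would have to be supplied by) a common neighbour. The only cosmetic differences are that the paper verifies $K\in\A{R}$ via Proposition~\ref{elements}(d) (showing $\overline{h_Y^c(K)}\neq Y$) rather than via $I\subseteq\An(K)$, and in the converse it argues with the open set $h_Y^c(K)$ meeting the dense set rather than with the closed set $h_Y(K)$ containing it; neither difference is substantive.
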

\begin{proof}
	(a $ \Rightarrow $). By the assumption and Lemma \ref{adjacent}, $ I $ is adjacent to $ J $, so $ h_Y^c(I) \cap h_Y^c(J) = \emptyset $. If $ \overline{h_Y^c(I) \cup h_Y^c(J)} \neq Y $, since $ \{ h_Y^c(K) : K \text{ is an ideal of } R \} $ is a base for Zariski topology, it follows that there is some ideal $ K $ of $ R $ such that $ h_Y^c(K) \cap \left[ h_Y^c(I) \cup h_Y^c(J) \right] = \emptyset $, so $ h_Y^c(K) \cap h_Y^c(I)  = h_Y^c(K) \cap h_Y^c(J) = \emptyset $, $ h_Y^c(K) \neq Y $ and $ \overline{h_Y^c(K)} \neq Y $, thus $ K \in \A{R} $, by Proposition \ref{elements}, and $ K $ is adjacent to the both vertices $ I $ and $ J $, by Lemma \ref{adjacent}, which contradicts the assumption, hence $ \overline{h_Y^c(I) \cup h_Y^c(J)} = Y $.
	
	(a $ \Leftarrow $). By the assumption and Lemma \ref{adjacent}, $ h_Y^c(I) \cap h_Y^c(J) = \emptyset $. On contrary, suppose that there is an $ K \in \A{R} $, such that $ K $ is adjacent to the both vertices $ I $ and $ J $, then $ h_Y^c(K) \cap \left[ h_Y^c(I) \cup h_Y^c(J) \right] = \left[ h_Y^c(K) \cap h_Y^c(I) \right] \cup \left[ h_Y^c(K) \cap h_Y^c(J) \right] = \emptyset $, by Lemma \ref{adjacent}. Since $ K \in \A{R} $, by Proposition \ref{elements}, $ h_Y^c(K) \neq \emptyset $, and therefore $ \overline{h_Y^c(I) \cup h_Y^c(J)} \neq Y $, which contradicts the assumption.
	
	(b). By this fact $ \{ h_Y^c(c) : c \in R \} $ is a base for Zariski topology, it is similar to part (a).
\end{proof}

Suppose that $ \bigcap Y = \{ 0 \} $. Since for every $ I \in \A{R} $, $ I $ and $ \An(I) $ are orthogonal, the above theorem implies that $ \overline{ h^c_Y(I) \cap h^c_Y(\An(I)) } = Y  $. Similarly, for every $ a \in Z(R)^* $ and $ b \in \An(a) $, we have $ \overline{h_Y^c(a) \cup h_Y^c(b)} = Y $.

\begin{Thm}
	Suppose that $ I \in \A{R} $, $ a \in Z(R)^* $, $ Y \subseteq \Mi(R) $ and $ \bigcap Y = \{ 0 \} $. Then
	\begin{itemize}
		\item[(a)] For every $ I \in \A{R} $, $ \Ec(I)>1 $.
		\item[(b)] $ \Ec(I) = 2 $, \ff $ h_Y^c(I) $ is singleton.
		\item[(c)] $ \Ec(I) = 3 $, \ff $ h_Y^c(I) $ is not singleton.
		\item[(d)] For every $ a\in Z(R)^* $, $ \Ec(a) > 1 $.
		\item[(e)] $ \Ec(a) = 2 $, \ff $ h_Y^c(a) $ is singleton.
		\item[(f)] $ \Ec(a) = 3 $, \ff $ h_Y^c (a) $ is not singleton.
	\end{itemize}
\label{ecc}
\end{Thm}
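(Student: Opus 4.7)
Parts (d), (e), (f) are element-level analogues of (a), (b), (c), each obtained by replacing every ideal $I, J$ by the principal ideal generated by the corresponding element and invoking Lemma~\ref{adjacent}(a) and Proposition~\ref{distance}(a)--(c) in place of (b) and (d)--(f); I sketch only the ideal side. Throughout, the key ingredients are that $Y \subseteq \Mi(R)$ inherits from $\Mi(R)$ a Hausdorff topology with clopen base $\{h_Y(a) : a \in R\}$, and that $\Di(\AG(R)) \leqslant 3$, so $\Ec(I) \in \{1, 2, 3\}$ for every vertex. Combined with (a), parts (b) and (c) become a dichotomy on whether or not $h_Y^c(I)$ is a singleton, so the reverse implications of (b) and (c) follow formally from their forward directions.

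For (a), the goal is to produce $J \in \A{R}$ with $J \neq I$ and $IJ \neq \{0\}$, equivalently (by Lemma~\ref{adjacent}(b)) with $h_Y^c(I) \cap h_Y^c(J) \neq \emptyset$. Since $R$ is reduced and $I \neq \{0\}$, $I^2 \neq \{0\}$; pick $b \in I$ with $bI \neq \{0\}$. Then $(b) \in \A{R}$ because $\An((b)) \supseteq \An(I) \neq \{0\}$, and $(b) \cdot I = bI \neq \{0\}$. If $(b) \neq I$, take $J = (b)$; otherwise $I = (b)$ is principal, and I build $J$ by adjoining to $(b)$ an element coming from a basic clopen $h_Y^c(c) \subseteq Y \setminus \overline{h_Y^c(I)}$ (non-empty by Proposition~\ref{elements}(d)), using the clopen base of $Y$ to certify $\overline{h_Y^c(J)} \neq Y$ so that $J \in \A{R}$.

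For (b)$\Leftarrow$, assume $h_Y^c(I) = \{P\}$ and suppose toward contradiction that $d(I, J) = 3$ for some $J \in \A{R}$. Proposition~\ref{distance}(f) forces $\{P\} \cap h_Y^c(J) \neq \emptyset$, so $P \in h_Y^c(J)$, whence $\{P\} \cup h_Y^c(J) = h_Y^c(J)$; its density in $Y$ then gives $\overline{h_Y^c(J)} = Y$, contradicting $J \in \A{R}$ via Proposition~\ref{elements}(d). So every vertex sits at distance at most $2$ from $I$, and with (a) this gives $\Ec(I) = 2$.

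For (c)$\Leftarrow$, assume $h_Y^c(I)$ contains two distinct points $P_1, P_2$. The Hausdorff clopen base supplies $a \in R$ with $P_1 \in h_Y^c(a)$ and $P_2 \notin h_Y^c(a)$, and some $b \in I$ with $P_2 \in h_Y^c(b) \subseteq h_Y^c(I)$. Let $J$ be an ideal realising $h_Y^c(a) \cup (Y \setminus \overline{h_Y^c(I)})$ as its $h_Y^c(J)$ (every open set is a union of basic clopens, hence $h_Y^c$ of some ideal). Then $P_1 \in h_Y^c(I) \cap h_Y^c(J)$, and $\overline{h_Y^c(I) \cup h_Y^c(J)} \supseteq \overline{h_Y^c(I)} \cup (Y \setminus \overline{h_Y^c(I)}) = Y$, so the union is dense. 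Also $P_2 \notin h_Y^c(a)$ by construction, while $h_Y^c(b)$ is a neighborhood of $P_2$ contained in $\overline{h_Y^c(I)}$ and hence disjoint from $Y \setminus \overline{h_Y^c(I)}$, so $P_2 \notin \overline{Y \setminus \overline{h_Y^c(I)}}$; thus $P_2 \notin \overline{h_Y^c(J)}$ and $J \in \A{R}$ by Proposition~\ref{elements}(d). Proposition~\ref{distance}(f) then yields $d(I, J) = 3$, so $\Ec(I) = 3$. The main difficulty throughout (a) and (c) is to simultaneously arrange the prescribed position of $h_Y^c(J)$ inside $Y$ while ensuring $\overline{h_Y^c(J)} \neq Y$; the clopen base of $Y$ is what makes both constraints jointly satisfiable.
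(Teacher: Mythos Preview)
Your arguments for (b) and (c) are correct and track the paper closely. Your (b)\,$\Leftarrow$ is exactly the contrapositive of the paper's (c)\,$\Rightarrow$, and your witness in (c)\,$\Leftarrow$ is the paper's: since $h_Y^c(\An(I))=Y\setminus\overline{h_Y^c(I)}$, the ideal $J$ you build with $h_Y^c(J)=h_Y^c(a)\cup\bigl(Y\setminus\overline{h_Y^c(I)}\bigr)$ is just $(a)+\An(I)$, which plays the same role as the paper's $J+\An(I)$. (One labeling slip: you announce that the \emph{reverse} implications follow formally, but you then prove the reverse directions and deduce the \emph{forward} ones from the dichotomy; the logic itself is fine.)

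For (a) you take a genuinely different route from the paper, and this is where there is a gap. The paper does not construct a non-adjacent vertex directly; it deduces (a) from (c): if $\Ec(I)=1$ then, by (c), $h_Y^c(I)=\{P\}$, whence $\An(I)=P$ and $I\subseteq\An(P)$; using $\Ec(I)=1$ to make $I$ adjacent to $\An(P)$ gives $I\An(P)=\{0\}$, so every $a\in I$ satisfies $a^2\in I\An(P)=\{0\}$, contradicting reducedness. Your direct construction, by contrast, is only sketched in the principal case $I=(b)$ and cannot always be completed: you propose $J=(b,c)$ with $h_Y^c(c)\subseteq Y\setminus\overline{h_Y^c(I)}$ and assert that the clopen base lets you ``certify $\overline{h_Y^c(J)}\neq Y$'', but if $h_Y^c(I)=\{P\}$ and $Y\setminus\overline{h_Y^c(I)}$ is a single point (as when $|Y|=2$), every admissible $c$ forces $h_Y^c(J)=Y$, so $J=R\notin\A{R}$. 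Since you already have (c)\,$\Leftarrow$, the non-singleton case of (a) is immediate; for the remaining singleton case, the paper's contradiction via $\An(P)$ is the argument to supply rather than a direct construction.
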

\begin{proof}
	Since $ R $ is not an integral domain and $ \bigcap Y  = \{ 0 \} $, it follows that $ | Y | \geqslant 2 $.
		
	(c $ \Rightarrow $). By the assumption there is some $ J \in \A{R} $ such that $ d(I,J) = 3 $. Lemma \ref{distance}, implies that $ h_Y^c(I) \cap h_Y^c(J) = \emptyset $ and $ \overline{h_Y^c(I) \cup h_Y^c(J)} = Y $. On contrary, suppose that $ h_Y^c(I) $ is singleton, then $ h_Y^c(I) \subseteq h_Y^c(J) $ and therefore $ \overline{h_Y^c(J)} = \overline{ h^c_Y(I) \cup h^c_Y(J) } = Y $, so $ J \notin \A{R} $, by Lemma \ref{adjacent}, which is a contradiction.
	
	(c $ \Leftarrow $). By the assumption, there are distinct prime ideals $ P $ and $ Q $ in $ h_Y^c(I) $. Since $ Y \subseteq \Mi(R) $ is Hausdorff and $ \{ h_Y^c(K) : K \text{ is an ideal of } R \} $ is a base for $ Y $, there are ideals $ J $ and $ K $ such that $ h_Y^c(J) , h_Y^c(K) \subseteq h_Y^c(I) $, $ P \in h_Y^c(J) $, $ Q \in h_Y^c(K) $ and $ h_Y^c(J) \cap h_Y^c(K) = \emptyset $. Thus 
		\begin{align*}
		h_Y^c(J + \An(I)) \cap h_Y^c(K) & = \left[ h_Y^c(J) \cup h_Y^c(\An(I)) \right] \cap h_Y^c(K) \\
								 		& \subseteq \left[ h_Y^c(J) \cap h_Y^c(K) \right] \cup \left[ h_Y^c(\An(I)) \cap h_Y^c(I) \right] = \emptyset.
		\end{align*}
	Hence $ h_Y^c(J + \An(I)) \neq Y $ and $ \overline{h_Y^c(J + \An(I))} \neq Y $, so $ J + \An(I) \in \A{R} $. Since 
	\[ h_Y^c(I) \cap h_Y^c(J + \An(I)) \supseteq h_Y^c(I) \cap h_Y^c(J) = h_Y^c(I) \neq \emptyset \]
	and 
	\[ \overline{ h^c_Y(I) \cap h^c_Y(J + \An(I)) } \supseteq \overline{ h^c_Y(I) \cap h^c_Y(\An(I)) } = Y, \]
	by Proposition \ref{distance}, $ d(I,J + \An(I)) = 3 $ and therefore $ \Ec(I) = 3 $, by \cite[Theorem 7.1]{behboodi2011annihilating}.
	
	(a). Suppose that there is some $ I \in \A{R} $ such that $ \Ec(I) = 1 $. By part (c), $ h_Y^c(I) $ is singleton, so there is some $ P \in Y $, such that $ h_Y^c(I) = \{ P \} $, thus $ \An(I) = P $, hence $ \{ 0 \} \neq I \subseteq \An(P) $. Since $ \Ec(I) = 1 $, $ I $ is adjacent to $ \An(P) $, consequently $ I \An(P) = \{ 0 \} $, this implies that for every $ a \in I $, $ a^2 \in I \An(P) = \{ 0 \} $, and therefore $ a^2 = 0 $. Since $ R $ is reduced, $ a = 0 $, and consequently $ I = \{ 0 \} $, which is a contradiction.
	
	(b). By parts (a), (c) and \cite[Theorem 7.1]{behboodi2011annihilating}, it is clear.
	
	The proof of (d), (e) and (f) are similar to parts (a), (b) and (c), respectively.
\end{proof}

The following corollary is an immediate consequence of the above theorem. 

\begin{Cor}
	$ \Ra \Gamma(R) > 1 $ and $ \Ra \AG(R) > 1 $.
\label{radius}
\end{Cor}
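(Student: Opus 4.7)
The plan is to observe that this corollary is essentially immediate from Theorem \ref{ecc}, together with the definition of the radius. Recall that by definition, $\Ra(G) = \min\{\Ec(v) : v \in V(G)\}$, so to show $\Ra(G) > 1$ it suffices to verify that every vertex of $G$ has eccentricity strictly greater than $1$.

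First I would note that since $R$ is a reduced ring, $\bigcap \Mi(R) = \{0\}$, so $Y = \Mi(R)$ satisfies the hypothesis $Y \subseteq \Mi(R)$ with $\bigcap Y = \{0\}$ required by Theorem \ref{ecc}. Then invoking Theorem \ref{ecc}(a) tells us that $\Ec(I) > 1$ for every $I \in \A{R}$, so taking the minimum over all vertices yields $\Ra(\AG(R)) > 1$. Analogously, Theorem \ref{ecc}(d) gives $\Ec(a) > 1$ for every $a \in Z(R)^*$, which yields $\Ra(\Gamma(R)) > 1$.

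There is no real obstacle here; the substantive content has already been extracted in Theorem \ref{ecc}(a) and (d), where the use of the reducedness of $R$ (via the argument that $a^2 = 0 \Rightarrow a = 0$) rules out eccentricity $1$. The corollary amounts to repackaging this pointwise statement as a bound on the minimum.
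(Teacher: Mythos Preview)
Your proposal is correct and matches the paper's approach exactly: the paper simply states that the corollary is an immediate consequence of Theorem \ref{ecc}, which is precisely what you spell out by applying parts (a) and (d) with $Y = \Mi(R)$.
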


\begin{Pro}
	Let $ a \in Z(R)^* $, $ I \in \A{R} $, $ Y \subseteq \Mi(R) $ and $ \bigcap Y = \{ 0 \} $. Then
	\begin{itemize}
		\item[(a)] $ a $ is a vertex of a triangle, \ff $ h_Y(a) $ is not singleton.
		\item[(b)] $ I $ is a vertex of a triangle, \ff $ h_Y(I)^\circ $ is not singleton.
	\end{itemize}
\label{triangle}
\end{Pro}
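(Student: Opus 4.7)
My plan is to derive (a) from (b) by applying the latter to the principal ideal $(a) \in \A{R}$, so the real work sits in part (b). The proof will lean on three ingredients: Lemma \ref{adjacent} translating adjacency into the vanishing of a certain intersection; the description of the Zariski topology on $Y$, whose open sets are precisely $\{h_Y^c(L) : L \text{ an ideal of } R\}$; and the fact that in the reduced ring $R$ any ideal with $L^2 = \{0\}$ must be trivial.

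For (b)$(\Rightarrow)$ I argue by contrapositive. The case $h_Y(I)^\circ = \emptyset$ contradicts $I \in \A{R}$ through Proposition \ref{elements}(d). If $h_Y(I)^\circ = \{P_0\}$ and $I$ were in a triangle with $J, K$, Lemma \ref{adjacent} forces $h_Y^c(J)$ and $h_Y^c(K)$ to be open subsets of $h_Y(I)$, hence of $h_Y(I)^\circ = \{P_0\}$; nonzeroness of $J, K$ makes both equal $\{P_0\}$, so $h_Y^c(J) \cap h_Y^c(K) = \{P_0\}$, contradicting $JK = \{0\}$. Conversely, given distinct $P_1, P_2 \in h_Y(I)^\circ$, Hausdorffness of $Y$ produces disjoint open neighbourhoods $W_1 \ni P_1$, $W_2 \ni P_2$ inside $h_Y(I)^\circ$; expressing them as $W_1 = h_Y^c(J)$, $W_2 = h_Y^c(K)$ for ideals $J, K$ (possible because every open in $Y$ has this form), Lemma \ref{adjacent} yields $IJ = IK = JK = \{0\}$. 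It only remains to verify $J, K \in \A{R}$ and that $I, J, K$ are pairwise distinct, all of which are immediate: the $h_Y^c$'s are nonempty (hence the ideals are nonzero), their annihilators contain $I \neq \{0\}$, any coincidence like $J = I$ would produce $I^2 = \{0\}$ and force $I = \{0\}$, and the points $P_1, P_2$ distinguish $J$ from $K$.

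Part (a) is then a short corollary. Since $h_Y((a)) = h_Y(a) = Y \cap h_m(a)$ is clopen in $Y$ (because $h_m(a)$ is clopen in $\Mi(R)$), its interior equals itself, so the hypothesis ``$h_Y(a)$ not singleton'' is exactly the hypothesis for (b) applied to $(a) \in \A{R}$. A triangle $(a), J, K$ in $\AG(R)$ supplies a triangle in $\Gamma(R)$ by choosing any $0 \neq b \in J$, $0 \neq c \in K$: the products $ab, ac, bc$ vanish, $b$ and $c$ are zero-divisors because $\An(b), \An(c) \ni a \neq 0$, and the same reduced-ring trick ($b = a$ would put $a^2$ in $(a) \cdot J = \{0\}$) ensures they are distinct from $a$ and from each other. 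The converse of (a) repeats the singleton argument: $h_Y(a) = \emptyset$ is ruled out by picking $0 \neq t \in \An(a)$ and noting that $t \notin P$ for some $P \in Y$ forces $a \in P$, while $h_Y(a) = \{P_0\}$ makes every neighbour's $h_Y^c$ equal $\{P_0\}$. The main delicate point is (b)$(\Leftarrow)$, and specifically the fact that any open subset of $Y$ is realised as $h_Y^c(L)$ for a genuine ideal $L$; everything else is topological bookkeeping combined with the reduced-ring identity.
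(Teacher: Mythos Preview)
Your argument is correct. The core ingredients --- Hausdorffness of $Y\subseteq\Mi(R)$, the Zariski description of open sets, Lemma~\ref{adjacent} and Proposition~\ref{elements}, and the ``$L^2=\{0\}\Rightarrow L=\{0\}$'' trick in a reduced ring --- are exactly the ones the paper uses, so in content the proofs agree.

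The organization, however, is inverted. The paper proves~(a) first and directly: given distinct $P,Q\in h_Y(a)$ it uses the element-indexed base $\{h_Y^c(x):x\in R\}$ to pick $b,c\in R$ with $P\in h_Y^c(b)\subseteq h_Y(a)$, $Q\in h_Y^c(c)\subseteq h_Y(a)$ and $h_Y^c(b)\cap h_Y^c(c)=\emptyset$; then $a,b,c$ is already a triangle in $\Gamma(R)$, with no detour through $\AG(R)$. Part~(b) is then declared ``similar''. Your route --- prove~(b) with ideal-indexed opens, then specialize to $(a)$ and translate the resulting $\AG(R)$-triangle into a $\Gamma(R)$-triangle --- works, but the translation step (choosing $b\in J$, $c\in K$, checking distinctness via $b=c\Rightarrow b^2\in JK=\{0\}$, etc.) is extra labor that the paper sidesteps by working with elements from the outset. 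Conversely, your observation that every open subset of $Y$ equals $h_Y^c(L)$ for some ideal $L$ (not merely contains such a set) is a slight sharpening over the paper's use of the base; it makes the construction in~(b) marginally cleaner.
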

\begin{proof}
	(a $ \Rightarrow $). By the assumption, there are vertices $ b , c \in \A{R} $ such that $ a $, $ b $ and $ c $ are pairwise vertices which are adjacent together. Thus $ h_Y^c(a) $, $ h_Y^c(b) $ and $ h_Y^c(c) $ are pairwise disjoint nonempty sets, by Theorem \ref{adjacent} and Proposition \ref{elements}, hence $ h_Y^c(b) \cup h_Y^c(c) \subseteq h_Y(a) $ and $ | h_Y^c(b) \cup h_Y^c(c) | \geqslant 2 $, since $ h_Y^c(b) \cup h_Y^c(c) $ is open, it follows that $ h_Y(a) $ is not singleton.
	
	(a $ \Leftarrow $). Suppose that $ P $ and $ Q $ are distinct elements of $ h_Y(a) $. Since $ Y \subseteq \Mi(R) $ is Hausdorff, $ h_Y(a) $ is open and $ \{ h_Y^c(x) : x \in R \} $ is a base for $ Y $, there are $ b , c \in R $ such that $ P \in h_Y^c(b) \subseteq h_Y(a) $, $ Q \in h_Y^c(c) \subseteq h_Y(a) $ and $ h_Y^c(b) \cap h_Y^c(c) = \emptyset $, so $ h_Y^c(a) $, $ h_Y^c(b) $ and $ h_Y^c(c) $ are pairwise disjoint nonempty sets which are not dense in $ Y $. Now Proposition \ref{elements}, implies that $ b,c \in \A{R} $ and Theorem \ref{adjacent}, concludes that $ a $, $ b $ and $ c $ are pairwise vertices which are adjacent together, hence $ a $ is a vertex of a triangle.
	
	(b). It is similar to part (a).
\end{proof}

\begin{Pro}
	Suppose that $ a,b \in Z(R)^* $ are not pendant vertices, $ Y \subseteq \Mi(R) $ and $ \bigcap Y = \{ 0 \} $. Then
	\begin{itemize}
		\item[(a)] $ h_Y^c(a) \cap h_Y^c(b) = \emptyset $ and $ \overline{h_Y^c(a) \cup h_Y^c(b)} \neq Y $, \ff $ \gi(a,b) = 3 $.
		\item[(b)] If $ 2 \notin Z(R) $, $ h_Y^c(a) \cap h_Y^c(b) = \emptyset $ and $ \overline{h_Y^c(a) \cup h_Y^c(b)} = Y $, then $ \gi(a,b) = 4 $.
		\item[(c)] Suppose that $ h_Y^c(a) \cap h_Y^c(b) \neq \emptyset $. Then $ \overline{h_Y^c(a) \cup h_Y^c(b)} \neq Y $, \ff $ \gi(a,b) = 4 $.
		\item[(d)] Suppose that $ 2 \notin Z(R) $ and $ h_Y^c(a) \cap h_Y^c(b) \neq \emptyset $. Then $ \overline{h_Y^c(a) \cup h_Y^c(b)} = Y $, \ff $ \gi(a,b) = 6 $.
	\end{itemize}
\end{Pro}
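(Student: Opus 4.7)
My plan is to reduce each part to the distance characterization of Proposition \ref{distance} combined with an explicit cycle construction: the topological hypotheses hand down lower bounds on $\gi(a,b)$ for free, while the non-pendant hypothesis (together with $2 \notin Z(R)$ in (b) and (d)) drives the matching upper bounds.

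Part (a) is a direct consequence of Theorem \ref{orthogonal}(b): a triangle through $a$ and $b$ exists iff $a,b$ are adjacent and share a neighbour, iff adjacent but not orthogonal, which by Lemma \ref{adjacent} and Theorem \ref{orthogonal}(b) is precisely the stated condition. For the lower bounds in (b), (c), (d), I would read $d(a,b)$ off from Proposition \ref{distance}: in (b) orthogonality forbids triangles through the edge $ab$; in (c) and (d) non-adjacency does the same, so $\gi(a,b) \geq 4$ in (b) and (c); in (d), the density condition gives $d(a,b) = 3$ by Proposition \ref{distance}(c), and since any cycle through two vertices has length at least twice their distance, $\gi(a,b) \geq 6$.

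The upper bounds are where the work lies. For (b), I would pick a neighbour $c \neq b$ of $a$ and a neighbour $d \neq a$ of $b$, available by non-pendantness; orthogonality forces $bc, ad \neq 0$ and $c \neq d$, and the identities $a(bc) = (bc)(ad) = (ad)b = 0$ display $a - bc - ad - b - a$ as a candidate 4-cycle once distinctness of the four vertices is secured. Reducedness handles the easy collisions (e.g.\ $bc = a \Rightarrow a^2 = a(bc) = 0$, forcing $a = 0$), and $2 \notin Z(R)$ handles the rest through the coset observation that $c$ and $2c$ cannot both satisfy $bc = b$ since that would give $b = 2b = 0$. For (c), the common neighbour $c_1$ given by $d(a,b) = 2$ is supplemented by a second common neighbour extracted from the non-empty open set $U = Y \setminus \overline{h_Y^c(a) \cup h_Y^c(b)}$: when $|U| \geq 2$, Hausdorffness of $\Mi(R)$ together with the clopen base splits $U$ into two disjoint basic opens, and when $U$ is smaller a ring-theoretic multiplication argument using non-pendantness produces a second distinct vertex supported in $U$, closing the 4-cycle $a - c_1 - b - c_2 - a$. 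For (d), I would assemble the 6-cycle from two internally disjoint length-3 paths from $a$ to $b$: one given by $d(a,b) = 3$ via Proposition \ref{distance}(c), and the second constructed by doubling intermediate vertices (valid because $2 \notin Z(R)$) and combining with non-pendant choices at the two endpoints.

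The principal obstacle is the distinctness bookkeeping in each construction: verifying that the four (respectively six) vertices of the cycle lie in $Z(R)^*$, are pairwise distinct, and that in (d) the two length-3 paths share no internal vertex. These checks---ruling out coincidences such as $bc = b$, $bc = ad$, or overlap between the two paths in (d)---are precisely where $2 \notin Z(R)$ and non-pendantness both do their work via reduced-ring and coset arguments, and I expect the bulk of the case analysis of a complete proof to be concentrated there.
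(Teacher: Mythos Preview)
The paper's proof is a one-line citation to Samei's Theorem 3.4, so your proposal is effectively reconstructing the intended argument; the reductions via Proposition \ref{distance} and Theorem \ref{orthogonal}, the lower bound $\gi(a,b) \geq 2\,d(a,b)$, and the explicit cycle constructions are exactly the right shape, and parts (a), (b), (d) go through as you outline (the doubling trick $x \mapsto 2x$ under $2 \notin Z(R)$ is the standard device there).

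There is, however, a genuine gap in your treatment of the forward direction of (c) when $U = Y \setminus \overline{h_Y^c(a) \cup h_Y^c(b)}$ is a singleton. You assert that ``a ring-theoretic multiplication argument using non-pendantness produces a second distinct vertex supported in $U$'', but non-pendantness alone cannot deliver this. Take $R = \mathbb{F}_2^4$, $a = (1,1,0,0)$, $b = (1,0,1,0)$: both are non-pendant (each has three neighbours), $h_Y^c(a) \cap h_Y^c(b) = \{P_1\} \neq \emptyset$, and $U = \{P_4\}$. Since $a,b$ are not adjacent, any $4$-cycle through them has the form $a - c_1 - b - c_2 - a$ with $c_1 \neq c_2$ both common neighbours; but every common neighbour $c$ satisfies $h_Y^c(c) \subseteq U$, and the only nonzero element of $R$ with support in $\{4\}$ is $(0,0,0,1)$. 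Hence no $4$-cycle exists and in fact $\gi(a,b) = 5$. Non-pendantness supplies extra neighbours of $a$ and of $b$ \emph{separately}, but no multiplication of such elements lands back in $\An(a) \cap \An(b)$. Under the additional hypothesis $2 \notin Z(R)$ the argument is rescued instantly by $c_2 = 2c_1$; without it, part (c) as stated is false and your sketch cannot be completed in that case.
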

\begin{proof}
	By Proposition \ref{elements} and Lemma \ref{adjacent}, it has a similar proof to \cite[Theorem 3.4]{samei2007zero}.
\end{proof}

\begin{Thm}
	Suppose that $ I , J \in \A{R} $ and they are not pendant vertices. The following statements hold.
	\begin{itemize}
		\item[(a)] $ h_Y^c(I) \cap h_Y^c(J) = \emptyset $ and $ \overline{h_Y^c(I) \cup h_Y^c(J)} \neq Y $, \ff $ \gi(I,J) = 3 $.
		\item[(b)] If $ h_Y^c(I) \cap h_Y^c(J) = \emptyset $ and $ \overline{h_Y^c(I) \cup h_Y^c(J)} = Y $, then $ \gi(I,J) = 4 $.
		\item[(c)] If $ h_Y^c(I) \cap h_Y^c(J) \neq \emptyset $ and $ \overline{h_Y^c(I)} = \overline{h_Y^c(J)} $, then $ \gi(I,J) = 4 $.
		\item[(d)] If $ h_Y^c(I) \cap h_Y^c(J) \neq \emptyset $ and $ \overline{h_Y^c(I)} \neq \overline{h_Y^c(J)} $ and $ Y \setminus \overline{h_Y^c(I) \cup h_Y^c(J)} $ is not singleton, then $\gi(I,J) = 4$.
		\item[(e)] If $ h_Y^c(I) \cap h_Y^c(J) \neq \emptyset $, $ \overline{h_Y^c(I)} \neq \overline{h_Y^c(J)} $ and $ Y \setminus \overline{h_Y^c(I) \cup h_Y^c(J)} $ is singleton, then $4 \leqslant \gi(I,J) \leqslant 5 $.
		\item[(f)] If $ \gi(I,J) = 5 $, then $ h_Y^c(I) \cap h_Y^c(J) \neq \emptyset $, $ \overline{h_Y^c(I)} \neq \overline{h_Y^c(J)} $ and $ Y \setminus \overline{h_Y^c(I) \cup h_Y^c(J)} $ is singleton.
	\end{itemize}
\end{Thm}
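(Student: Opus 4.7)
The approach throughout is to translate each girth condition for the pair $I,J$ into a topological statement about the open sets $h_Y^c(I), h_Y^c(J)$ via Lemma \ref{adjacent}, to certify constructed ideals as vertices via Proposition \ref{elements}, and to realise the required cycles by exploiting that $\{h_Y^c(K) : K \text{ an ideal of } R\}$ is a base for the Zariski topology on $Y$ while $Y \subseteq \Mi(R)$ is Hausdorff. The recurring sanity check is that whenever a basic open $h_Y^c(K)$ is chosen inside some $h_Y(I)$ with $I \neq \{0\}$, one automatically has $\overline{h_Y^c(K)} \subseteq h_Y(I) \neq Y$, so $K \in \A{R}$.

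For (a), a triangle through $I,J$ is the same as a third vertex adjacent to both, which via the base translates directly to a non-empty basic open disjoint from $h_Y^c(I) \cup h_Y^c(J)$, i.e., to the failure of density. For (b), part (a) excludes triangles, and non-pendancy furnishes a neighbour $K \neq J$ of $I$ and a neighbour $L \neq I$ of $J$; the decisive observation is that any point of $h_Y^c(K) \cap h_Y^c(L)$ would sit inside a basic open delivering a common neighbour of $I,J$, contradicting the triangle-free conclusion, so $K$ is adjacent to $L$ and $I{-}K{-}L{-}J{-}I$ is the desired 4-cycle.

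Parts (c)--(e) deal with $h_Y^c(I) \cap h_Y^c(J) \neq \emptyset$, where $I, J$ are non-adjacent and $\gi(I,J) \geqslant 4$ is automatic; a 4-cycle here requires two distinct common neighbours, which correspond to distinct basic opens in $U := Y \setminus \overline{h_Y^c(I) \cup h_Y^c(J)} = h_Y(I)^\circ \cap h_Y(J)^\circ$. In (c), the equality of closures gives $h_Y(I)^\circ = h_Y(J)^\circ$, so every neighbour of $I$ is already a common neighbour, and non-pendancy supplies two. In (d), two distinct points of $U$ separate via disjoint basic opens by Hausdorffness of $Y$, again giving two distinct common neighbours.

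The main obstacle is (e), where $U = \{P\}$ and distinct basic opens inside $U$ need not exist. I would split on whether $I$ and $J$ each admit a \emph{private} neighbour (one not shared with the other). If, say, $I$ has none, then every neighbour of $I$ has $h_Y^c = \{P\}$ and non-pendancy yields two common neighbours, hence a 4-cycle; symmetrically for $J$. If both sides have private neighbours, I would use that $\{P\}$ is clopen in $Y$ (it is open by hypothesis, closed by Hausdorffness) to refine private neighbours $M$ of $I$ and $L$ of $J$ so that their basic opens avoid $P$; then $h_Y^c(M) \cap h_Y^c(L) \subseteq h_Y(I)^\circ \cap h_Y(J)^\circ \setminus \{P\} = \emptyset$, giving $M$ adj $L$, and together with any common neighbour $K \in \A{R}$ with $h_Y^c(K) = \{P\}$ the 5-cycle $I{-}K{-}J{-}L{-}M{-}I$ closes. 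Finally, (f) is a formal corollary of (a)--(e): the hypothesis $\gi(I,J) = 5$ is incompatible with each of (a), (b), (c), and (d) in turn, forcing exactly the configuration of (e).
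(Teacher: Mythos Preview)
Your proof is correct. Parts (a), (b), (d) and (f) match the paper's argument essentially verbatim. In (c) you streamline the paper's case split on whether $\An(I)=\An(J)$ by observing directly that $h_Y(I)^\circ = h_Y(J)^\circ$ forces the neighbour sets of $I$ and $J$ to coincide, so non-pendancy of $I$ alone supplies two common neighbours and hence the 4-cycle. Part (e) is organised genuinely differently: the paper's dichotomy is whether the ideal $K$ with $h_Y^c(K)=\{P\}$ is unique --- if not, two such ideals give the 4-cycle; if so, the paper deduces (via a contradiction using non-pendancy of $J$) that $h_Y^c(I)\not\subseteq\overline{h_Y^c(J)}$ and symmetrically, and then builds the 5-cycle from basic opens inside $h_Y^c(I)\setminus\overline{h_Y^c(J)}$ and $h_Y^c(J)\setminus\overline{h_Y^c(I)}$. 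Your private-neighbour dichotomy reaches the same conclusion but places the auxiliary ideals in $h_Y(I)^\circ\setminus\{P\}$ and $h_Y(J)^\circ\setminus\{P\}$ instead; this is a bit more direct, since it sidesteps the paper's indirect argument for $h_Y^c(I)\not\subseteq\overline{h_Y^c(J)}$ and exploits the clopenness of $\{P\}$ explicitly.
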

\begin{proof}
	(a $\Rightarrow$). By Lemma \ref{adjacent}, $ I $ is adjacent to $ J $ and by Theorem \ref{orthogonal}, $ I $ and $ J $ are not orthogonal. Thus $ \gi(I,J) = 3 $.
	
	(a $\Leftarrow$). Then $ I $ is adjacent to $ J $ and the vertices $ I $ and $ J $ are not orthogonal, so by Lemma \ref{adjacent}, we have $ h_Y^c(I) \cap h_Y^c(J) = \emptyset $ and by Proposition \ref{orthogonal}, $ \overline{h_Y^c(I) \cup h_Y^c(J)} \neq Y $.
	
	(b). By the assumption $ I J = \{ 0 \} $, and we can see easily that $ h_Y(I)^\circ \cap h_Y(J)^\circ = \emptyset $, we know that $ h_Y^c(\An(I)) \subseteq h_Y(I)^\circ $ and $ h_Y^c(\An(J)) \subseteq h_Y(J)^\circ $, so $ h_Y^c(\An(I)) \cap h_Y^c(\An(J)) = \emptyset $. Now Lemma \ref{adjacent}, concludes that $ \An(I) \An(J) = \{ 0 \} $. Since $ I $ and $ J $ are not pendant vertices, there are $ I_1 , J_1 \in \AC $ such that $ I $ is adjacent to $ I_1 \neq J $ and $ J $ is adjacent to $ J_1 \neq I $, so $ I I_1 = J J_1 = \{ 0 \} $, thus $ I_1 \subseteq \An(I) $ and $ J_1 \subseteq \An(J) $, hence $ I_1 J_1 \subseteq \An(I) \An(J) = \{ 0 \} $ and therefore $ I_1 J_1 = \{ 0 \} $. Consequently, $ I $ is adjacent to $ J $, $ J $ is adjacent to $ J_1 $, $ J_1 $ is adjacent to $ I_1 $ and $ I_1 $ is adjacent to $ I $, they imply that $ \gi(I,J) = 4 $.
		
	(c). We can conclude from the assumption and part (a) that $ \gi(I,J) \geqslant 4  $. Clearly $ \An(I) , \An(J) \in \A{R} $. Since $ \overline{h_Y^c(I)} = \overline{h_Y^c(J)} $, it follows that $ h_Y^c(I) \cap h_Y^c(\An(J)) \subseteq \overline{h_Y^c(I)} \cap h_Y(J)^\circ = \overline{h_Y^c(I)} \cap \overline{h_Y^c(J)}^{^c} = \overline{h_Y^c(I)} \cap \overline{h_Y^c(I)}^{^c} = \emptyset $,
	so, by Lemma \ref{adjacent}, $ I \An(J) = \{ 0 \} $.  Similarly, we can show that $ J \An(I) = \{ 0 \} $. If $ \An(I) \neq \An(J) $, then $ I $ is adjacent to $ \An(I) $, $ \An(I) $ is adjacent to $ J $, $ J $ is adjacent $ \An(J) $ and $ \An(J) $ is adjacent to $ I $ and therefore $ \gi(I,J) = 4 $. Now we suppose that $ \An(I) = \An(J) $. Since $ I $ is adjacent to $ \An(I) $ and $ I $ is not a pendant vertex, it follows there is some vertex $  I_1 \in \AC $ distinct from $ \An(I) $ such that $ I $ is adjacent to $ I_1 $, then $ I_1 I = \{ 0 \} $, so $ I_1 \subseteq \An(I) = \An(J) $ and therefore $ I_1 J = \{ 0 \} $. Consequently, $ I $ is adjacent to $ \An(I) $, $ \An(J) $ is adjacent to $ J $, $ J $ is adjacent to $ I_1 $ and $ I_1 $ is adjacent to $ I $ and thus $ \gi(I,J)= 4 $.
	
	(d). We can conclude from the assumption and part (a) that $ \gi(I,J) \geqslant 4  $. Since $ \{ h_Y^c(K) : K \text{ is an ideal of } R \} $ is a base for $ Y $, $ Y $ is Hausdorff and $ Y \setminus\overline{ h^c_Y(I) \cap h^c_Y(J) } $ is not singleton, it follows that there are two distinct ideals $ K_1 $ and $ K_2 $ such that $ h_Y^c(K_1) \cap ( h_Y^c(I) \cup h_Y^c(J) ) = h_Y^c(K_2) \cap ( h_Y^c(I) \cup h_Y^c(J) ) = \emptyset $. Hence $ h_Y^c(I) \cap h_Y^c(K_1) = h_Y^c(K_1) \cap h_Y^c(J) = h_Y^c(J) \cap h_Y^c(K_2) = h_Y^c(K_2) \cap h_Y^c(I) =\emptyset $. Then, by Theorem \ref{elements}, $ K_1 , K_2 \in \A{R} $, and by Lemma \ref{adjacent}, $ I $ is adjacent to $ K_1 $, $ K_1 $ is adjacent to $ J $, $ J $ is adjacent to $ K_2 $ and $ K_2 $ is adjacent to $ I $. Consequently, $ \gi(I,J) = 4 $.
	
	(e). By part (a), $ \gi(I,J) \geqslant 4 $. Since $ Y \setminus \overline{h_Y^c(I) \cup h_Y^c(J)} \neq Y $ and $ \{ h_Y(K) : K \text{ is an ideal of } R \} $ is a base for $ Y $, it follows that there is some  ideal $ K_1 $ of $ R $ such that $ h_Y^c(K_1) \cap [ h_Y^c(I) \cup h_Y^c(J) ] = \emptyset $, so $ h_Y^c(K_1) \cap h_Y^c(I) = h_Y^c(K_1) \cap h_Y^c(J) = \emptyset $. By Theorem \ref{elements}, $ K_1 \in \A{R} $ and Lemma \ref{adjacent}, concludes that $ K_1 $ is adjacent to the both vertices $ I $ and $ J $. If there is an $ K_2 \in \A{R} $ distinct from $ K_1 $ such that $ h_Y^c(K_1) = h_Y^c(K_2) $, then $ K_2 $ also is adjacent to the both vertices $ I $ and $ J $. Thus $ \gi(I,J) = 4 $. Now suppose that $ h_Y^c(K) = h_Y^c(K_1) $ implies that $ K = K_1 $. If $ h_Y^c(I) \subseteq \overline{h_Y^c(J)} $, then $ \overline{h_Y^c(I)} \subseteq \overline{h_Y^c(J)} $, so $ Y \setminus \overline{h_Y^c(I) \cup h_Y^c(J)} = Y \setminus \overline{h_Y^c(J)} $ and therefore, by the assumption, $ Y \setminus \overline{h_Y^c(J)} $ is singleton. Since $ J $ is not a pendant vertex, there is some vertex $ K_2 $ such that $ K_2 $ is adjacent to $ J $, thus, by Lemma \ref{adjacent}, $ h_Y^c(K_2) \cap h_Y^c(J) = \emptyset $, so $ h_Y^c(K_2) \cap \overline{h_Y^c(J)} = \emptyset $, thus $ h_Y^c(K_2) \subseteq Y \setminus \overline{h_Y^c(J)} $. By Theorem \ref{elements}, $ h_Y^c(K_2) \neq \emptyset $ and therefore $ h_Y^c(K_2) = Y \setminus \overline{h_Y(J)} $. Similarly, we can show that $ h_Y^c(K_1) = Y \setminus \overline{h_Y(J)} $, hence $ h_Y^c(K_1) = h_Y^c(K_2) $, which is a contradiction. Hence $ h_Y^c(I) \not\subseteq \overline{h_Y^c(J)} $. Similarly one can show $ h_Y^c(J) \not\subseteq \overline{h_Y^c(I)} $, thus $ h_Y^c(I) \setminus \overline{h_Y^c(J)} $ and $ h_Y^c(J) \setminus \overline{h_Y^c(I)} $ are disjoint nonempty open sets. Since $ \{ h_Y(K) : K \text{ is an ideal of } R \} $ is a base for $ Y $, there are distinct ideals $ K_2 $ and $ K_3 $ such that $ h_Y^c(K_2) \subseteq h_Y^c(I) \setminus \overline{h_Y^c(J)} $ and $ h_Y^c(K_3) \subseteq h_Y^c(J) \setminus \overline{h_Y^c(I)} $. Consequently, $ h_Y^c(J) \cap h_Y^c(K_2) = h_Y^c(K_2) \cap h_Y^c(K_3) = h_Y^c(K_3) \cap h_Y^c(I) = \emptyset $. By Theorem \ref{elements}, we have $ K_2 , K_3 \in \A{R} $ and Lemma \ref{adjacent} concludes that $ I $ is adjacent to $ K_1 $, $ K_1 $ is adjacent to $ J $, $ J $ is adjacent to $ K_2 $, $ K_2 $ is adjacent to $ K_3 $ and $ K_3 $ is adjacent to $ I $, and therefore $ \gi(I,J) \leqslant 5 $.
	
	(f). It is clear, by parts (a)-(e).
\end{proof}

Suppose that $ R = \Z \times \Z \times \Z \times \Z $, $ I = \{ 0 \} \times \Z \times \Z \times \{ 0 \} $, $ J = \Z \times \{ 0 \} \times \Z \times \{ 0 \} $, $ R' = \R \times \R \times \R \times \R $, $ I' = \{ 0 \} \times \R \times \R \times \{ 0 \} $, $ J = \R \times \{ 0 \} \times \R \times \{ 0 \} $. Then the both pair vertices $ I, J \in \A{R} $ and $ I',J' \in \A{R'} $ satisfy in the conditions of part (e) of the above theorem but it is seen readily that $ \gi(I,J) = 4 $ and $ \gi(I',J')=5 $. 

Now we can conclude the following corollary from the above theorem and \cite[Corollary 4.2]{aliabad2013fixed}.

\begin{Cor}
	If for some $ I , J \in \A{R} $, we have $ \gi(I,J) = 5 $, then the following equivalent conditions hold
	\begin{itemize}
		\item[(a)] $ \Mi(R) $ has an isolated point.
		\item[(b)] $ \mathcal{B}(R) \neq \emptyset $.
	\end{itemize}
\end{Cor}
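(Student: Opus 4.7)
The plan is to bootstrap directly from part (f) of the preceding theorem together with the cited \cite[Corollary 4.2]{aliabad2013fixed}. Given $I,J\in\A{R}$ with $\gi(I,J)=5$, both $I$ and $J$ lie on a $5$-cycle and therefore have at least two distinct neighbours each, so neither is a pendant vertex and the hypothesis of the preceding theorem is met. I would then take $Y=\Mi(R)$, which is an admissible choice of $Y$ because $R$ is reduced, so $\bigcap\Mi(R)=\{0\}$.

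Applying part (f) of the preceding theorem with this choice of $Y$ yields that
\[
\Mi(R)\setminus\overline{h_m^c(I)\cup h_m^c(J)}
\]
is a singleton, say $\{P_\circ\}$. Since $\overline{h_m^c(I)\cup h_m^c(J)}$ is closed in $\Mi(R)$, its complement is open; hence $\{P_\circ\}$ is an open subset of $\Mi(R)$, which is precisely the statement that $P_\circ$ is an isolated point of $\Mi(R)$. This establishes (a). The equivalence (a)$\Leftrightarrow$(b) is then immediate from \cite[Corollary 4.2]{aliabad2013fixed}, which asserts that $\Mi(R)$ has an isolated point \ff $\mathcal{B}(R)\neq\emptyset$.

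There is no genuine obstacle: the whole point of the corollary is that the singleton clause in part (f) of the preceding theorem translates verbatim into the existence of an isolated point of $\Mi(R)$ once $Y$ is specialised to $\Mi(R)$. The only sanity check worth making explicit is that $\gi(I,J)=5$ really does force $I$ and $J$ to be non-pendant, which is automatic because every vertex of a cycle of length at least $3$ has at least two neighbours in that cycle.
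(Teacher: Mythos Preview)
Your proposal is correct and follows exactly the route the paper indicates: the paper's own proof is merely the sentence that the corollary follows from part (f) of the preceding theorem together with \cite[Corollary 4.2]{aliabad2013fixed}, and you have simply spelled out the details (the non-pendant check, the choice $Y=\Mi(R)$, and the observation that a singleton complement of a closed set is open).
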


\section{Radius and Triangulation}

This section is has been devoted to study of the radius and the triangulation of $ \Gamma(R) $ and $ \AG(R) $. We show that the concept of the anti fixed-place ideal plays the main role in this studying.

\begin{Thm}
	The following statement are equivalent.
	\begin{itemize}
		\item[(a)] $ \Ra \Gamma(R) = 3 $.
		\item[(b)] $ \Ra \AG(R) = 3 $.
		\item[(c)] The zero ideal of $ R $ is an anti fixed-place ideal.
		\item[(d)] The $ \Mi(R) $ does not have any isolated point.
	\end{itemize}
\label{radius = 3}
\end{Thm}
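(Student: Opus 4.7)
The plan is to route all four equivalences through Theorem~\ref{ecc} applied to $Y=\Mi(R)$, which is valid since $R$ is reduced and hence $\bigcap \Mi(R)=\{0\}$. Observing that the diameter bounds \cite[Theorem 2.2]{anderson1999zero} and \cite[Theorem 7.1]{behboodi2011annihilating} together with Corollary~\ref{radius} force $\Ra\Gamma(R),\Ra\AG(R)\in\{2,3\}$, the condition $\Ra\AG(R)=3$ becomes ``$\Ec(I)=3$ for every $I\in\A{R}$'', which by Theorem~\ref{ecc}(b)--(c) amounts to $h_m^c(I)$ being non-singleton for every $I\in\A{R}$; the analogue for $\Gamma(R)$ reads that $h_m^c(a)$ is non-singleton for every $a\in Z(R)^*$.

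For (a)$\Leftrightarrow$(d) and (b)$\Leftrightarrow$(d) I would argue as follows. If $\Mi(R)$ has no isolated point, every nonempty open subset contains at least two points; for any vertex $I\in\A{R}$ (resp.\ $a\in Z(R)^*$) the set $h_m^c(I)$ (resp.\ $h_m^c(a)$) is nonempty open (as $I,a$ are nonzero and $\bigcap\Mi(R)=\{0\}$), hence not a singleton, proving (b) (resp.\ (a)). Conversely, given an isolated $P\in\Mi(R)$, I use that $\{h_m^c(x):x\in R\}$ is a base to write $\{P\}=h_m^c(a)$ for some $a$. Since the graphs are nonempty, $|\Mi(R)|\geq 2$ and hence $P\neq\{0\}$; then in a reduced ring the hull-kernel identity $\An(a)=\bigcap h_m^c(a)=P\neq\{0\}$ shows $a\in Z(R)^*$ and $(a)\in\A{R}$, each with singleton $h_m^c$, hence of eccentricity $2$, refuting (a) and (b).

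For (c)$\Leftrightarrow$(d), the previous computation already shows that the existence of an isolated point of $\Mi(R)$ forces $\B(R)\neq\emptyset$, since the witness $a$ gives $P=\An(a)\in\B(R)$. For the converse, if $P=\An(x)\in\B(R)$, then for every $Q\in\Mi(R)\setminus\{P\}$ the incomparability of minimal primes yields some $y\in P\setminus Q$, and primeness of $Q$ together with $yx=0\in Q$ forces $x\in Q$; since $x\notin P$ (else $x\in\bigcap\Mi(R)=\{0\}$, contradicting $\An(x)\neq R$), one concludes $h_m^c(x)=\{P\}$, so $P$ is isolated. Alternatively this equivalence is \cite[Corollary 4.2]{aliabad2013fixed}. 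The main obstacle is the isolated-point bookkeeping in the previous paragraph---specifically ensuring $P\neq\{0\}$ so that the witness $a$ is genuinely a nonzero zero-divisor with $(a)\in\A{R}$; all other steps are direct applications of Theorem~\ref{ecc} and the elementary topological fact that a nonempty open subset of a space without isolated points contains at least two points.
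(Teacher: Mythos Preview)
Your proof is correct and follows essentially the same approach as the paper: both reduce, via Corollary~\ref{radius} and Theorem~\ref{ecc} applied to $Y=\Mi(R)$, to the question of whether some $h_m^c(a)$ or $h_m^c(I)$ is a singleton, and then identify this with the existence of an isolated point of $\Mi(R)$ and of a Bourbaki associated prime. The only differences are organizational---you route everything through (d) in a hub-and-spoke pattern while the paper argues in the cycle (a)$\Rightarrow$(b)$\Rightarrow$(c)$\Rightarrow$(a)---and you supply a self-contained argument for (c)$\Leftrightarrow$(d) where the paper simply invokes \cite[Corollary~4.3]{aliabad2013fixed}.
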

\begin{proof}
	(a) $ \Rightarrow $ (b). Suppose that $ \Ra \AG(R) \neq 3 $, then, by Corollary \ref{radius} and \cite{behboodi2011annihilating}, there is some $ I \in \A{R} $ such that $ \Ec(I) = 2 $, hence, Theorem \ref{ecc}, there is some $ P \in \Mi(R) $ such that $ h_m^c(I) = \{ P \} $, thus $ \An(I) = P $. Set $ 0 \neq a \in I  $, then  $ \emptyset \neq h_m^c(a) \subseteq h_m^c(I) = \{ P \} $, so $ h_m^c(a) = \{ P \} $ and therefore $ \Ec(a) = 2 $, by Theorem \ref{ecc}. Consequently, $ \Ra \Gamma(R) \neq 3 $.
	
	(b) $ \Rightarrow $ (c). Suppose the zero ideal of $ R $ is not an anti fixed-place ideal, then there is an affiliated prime ideal $ P $, hence $ a \in Z(R)^* $ exists such that $ \An(a) = P $, this implies that $ \Ge{a} \in \A{R} $ and $ h_m^c(\Ge{a}) = h_m^c(a) = \{ P \} $ and therefore $ \Ra \AG(R) \neq 3 $, by Theorem \ref{ecc}.
	
	(c) $ \Rightarrow $ (a). Suppose that $ \Ra \Gamma(R) \neq 3 $, then, Corollary \ref{radius} and \cite{behboodi2011annihilating}, there is some $ a \in Z(R)^* $ such that $ \Ec(a) = 2 $, hence, by Theorem \ref{radius}, there is some $ P \in \Mi(R) $ such that $ h_m^c(a) = \{ P \} $, thus $ \An(a) = P $, hence $ P $ is affiliated prime ideal, so $ P \in \mathcal{B}(R) \neq \emptyset $ and therefore the zero ideal of $ R $ is not an anti fixed-place ideal.
	
	(c) $ \Leftrightarrow $ (d). It implies from \cite[Corollary 4.3]{aliabad2013fixed}.
\end{proof}

The following corollary is an immediate consequence of the above theorem and Corollary \ref{radius}.

\begin{Cor}
	The following statement are equivalent.
	\begin{itemize}
		\item[(a)] $ \Ra \Gamma(R) = 2 $.
		\item[(b)] $ \Ra \AG(R) = 2 $.
		\item[(c)] The zero ideal of $ R $ is not an anti fixed-place ideal.
		\item[(d)] The $ \Mi(R) $ has an isolated point.
	\end{itemize}
\label{radius = 2}
\end{Cor}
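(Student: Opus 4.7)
The plan is to derive this corollary directly from Theorem \ref{radius = 3} by a simple contrapositive argument, using the fact that the radii of $\Gamma(R)$ and $\AG(R)$ can only take the values $2$ or $3$.

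First I would observe that by \cite[Theorem 2.2]{anderson1999zero} and \cite[Theorem 7.1]{behboodi2011annihilating}, the diameters of $\Gamma(R)$ and $\AG(R)$ are at most $3$, and hence the radii are also at most $3$. Combining this with Corollary \ref{radius}, which gives $\Ra\Gamma(R) > 1$ and $\Ra\AG(R) > 1$, I conclude that $\Ra\Gamma(R), \Ra\AG(R) \in \{2,3\}$. Therefore $\Ra\Gamma(R) = 2$ is equivalent to $\Ra\Gamma(R) \neq 3$, and similarly for $\AG(R)$.

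Next, negating each of the four equivalent conditions in Theorem \ref{radius = 3} yields the four equivalent conditions of the present corollary: the negation of $\Ra\Gamma(R) = 3$ is $\Ra\Gamma(R) = 2$ (by the dichotomy above), the negation of $\Ra\AG(R) = 3$ is $\Ra\AG(R) = 2$, the negation of ``the zero ideal is an anti fixed-place ideal'' is exactly ``the zero ideal is not an anti fixed-place ideal,'' and the negation of ``$\Mi(R)$ has no isolated point'' is ``$\Mi(R)$ has an isolated point.'' Since Theorem \ref{radius = 3} guarantees the equivalence of the four positive statements, their negations are also equivalent, which gives the corollary.

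There is essentially no obstacle here; the only thing to be careful about is citing the correct diameter bounds to ensure the radius takes only the values $2$ or $3$, so that the contrapositive step is legitimate.
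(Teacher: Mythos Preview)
Your proposal is correct and matches the paper's approach: the paper simply states that the corollary ``is an immediate consequence of the above theorem and Corollary~\ref{radius},'' which is exactly your contrapositive argument using $\Ra\Gamma(R),\Ra\AG(R)\in\{2,3\}$. Your version is in fact slightly more explicit, since you spell out the diameter bounds from \cite{anderson1999zero,behboodi2011annihilating} needed to ensure the radii do not exceed $3$.
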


Now we can conclude the following corollary from the above theorem and corollary.

\begin{Cor}
	$ \Ra \Gamma(R) = \Ra \AG(R) $.
\end{Cor}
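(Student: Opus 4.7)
The plan is to deduce the equality by observing that each of the two radii is confined to the two-element set $\{2,3\}$, and then invoking the two preceding results which match up the cases one-to-one. More precisely, Corollary \ref{radius} already gives $\Ra\Gamma(R) > 1$ and $\Ra\AG(R) > 1$, while the diameter bounds $\Di(\Gamma(R)) \le 3$ of \cite{anderson1999zero} and $\Di(\AG(R)) \le 3$ of \cite{behboodi2011annihilating} force the radii to be at most $3$. Hence both $\Ra\Gamma(R)$ and $\Ra\AG(R)$ lie in $\{2,3\}$.

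Once this is in place, the proof is a simple case distinction. First I would suppose $\Ra\Gamma(R) = 2$; then Corollary \ref{radius = 2} (equivalence of (a) and (b)) gives $\Ra\AG(R) = 2$, so the two radii agree. Otherwise $\Ra\Gamma(R) = 3$, and Theorem \ref{radius = 3} (equivalence of (a) and (b)) yields $\Ra\AG(R) = 3$, so again the radii agree. Running the same dichotomy starting from $\Ra\AG(R)$ shows the reverse direction is already packaged into the ``iff'' statements, so no additional work is needed.

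There is essentially no obstacle here, because all the heavy lifting is in Theorem \ref{radius = 3}. The only thing to be careful about is to actually justify that the radius is bounded above by $3$; the author may prefer to cite the diameter results explicitly, or simply invoke Theorem \ref{radius = 3} and Corollary \ref{radius = 2} together, which between them cover every possible value of each radius greater than $1$. Either framing gives a one-line conclusion.
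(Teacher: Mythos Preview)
Your proposal is correct and matches the paper's approach exactly: the paper simply states that the corollary follows from Theorem~\ref{radius = 3} and Corollary~\ref{radius = 2}, which is precisely the dichotomy you spell out. Your extra remark about the diameter bounds is a reasonable alternative justification for the upper bound, but as you note, invoking the two cited results alone already exhausts all possibilities and suffices.
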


\begin{Cor}
	Suppose that $ X $ is a Tychonoff topological space. Then
	\[ \Ra \Gamma (X) = \Ra \AG(X) = \begin{cases}
		 2 & \text{ If } X \text{ has an isolated point. } \\
		 3 & \text{ If } X \text{ does not have any isolated point. } 
	\end{cases} \]
\end{Cor}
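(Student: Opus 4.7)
The plan is to apply Theorem \ref{radius = 3} and Corollary \ref{radius = 2} with $R = C(X)$. The ring $C(X)$ is reduced (if $f^{2}\equiv 0$ then $f\equiv 0$ pointwise), so those results yield $\Ra\Gamma(X)=\Ra\AG(X)$ and, together with Corollary \ref{radius}, reduce the statement to the purely topological claim
\[ X \text{ has an isolated point } \iff \Mi(C(X)) \text{ has an isolated point}. \]

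For the forward direction, if $p\in X$ is isolated then $\{p\}$ is clopen, so $\chi_{\{p\}}\in C(X)$, and its annihilator $\An(\chi_{\{p\}})=M_{p}:=\{f\in C(X):f(p)=0\}$ is a maximal (hence prime) ideal. Thus $M_{p}\in\mathcal{B}(C(X))$, and by the cited \cite[Corollary 4.3]{aliabad2013fixed} $\Mi(C(X))$ has an isolated point.

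For the converse, which I expect to be the main obstacle, suppose $\Mi(C(X))$ has an isolated point; invoking \cite[Corollary 4.3]{aliabad2013fixed} in the other direction produces $0\neq f\in C(X)$ with $P:=\An(f)$ prime. Note that $g\in P$ iff $\Co(g)\cap\Co(f)=\emptyset$. I would show $|\Co(f)|=1$ as follows: if $\Co(f)$ contained two distinct points $p,q$, then complete regularity of $X$ would produce $g_{1},g_{2}\in C(X)$ with $g_{1}(p)\neq 0$, $g_{2}(q)\neq 0$, and with disjoint cozero sets, giving $g_{1}g_{2}\equiv 0\in P$ while $g_{1},g_{2}\notin P$ (since $g_{1}f$ is nonzero at $p$ and $g_{2}f$ is nonzero at $q$), contradicting primeness. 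Hence $\Co(f)=\{p\}$ for a single point $p$, and then continuity of $f$ together with $f\equiv 0$ on $X\setminus\{p\}$ forces $\{p\}$ to be open, i.e., $p$ is isolated in $X$. The crux is this last step: complete regularity of $X$ is what converts the algebraic/Zariski notion of an isolated minimal prime into a genuine point-set isolated point of $X$.
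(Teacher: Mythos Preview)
Your proof is correct. The overall strategy matches the paper's: apply Theorem~\ref{radius = 3} and Corollary~\ref{radius = 2} to $R=C(X)$, reducing everything to the equivalence between $X$ having an isolated point and the zero ideal of $C(X)$ failing to be anti fixed-place (equivalently, $\Mi(C(X))$ having an isolated point). The only difference is how that bridge is handled: the paper dispatches it in one stroke by citing \cite[Corollary~5.4]{aliabad2013fixed}, whereas you re-prove it directly---the forward direction via $\An(\chi_{\{p\}})=M_p\in\mathcal{B}(C(X))$, and the converse by showing that $\An(f)$ prime forces $\Co(f)$ to be a singleton (using complete regularity to separate two points by functions with disjoint cozero sets) and hence an isolated point by continuity. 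Your route is more self-contained and makes the role of the Tychonoff hypothesis explicit; the paper's is shorter but leans on an external result.
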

\begin{proof}
	It conclude from \cite[Corollary 5.4]{aliabad2013fixed}, Theorem \ref{radius = 3} and Corollary \ref{radius = 2}.
\end{proof}

\begin{Thm}
	The following statements are equivalent.
	\begin{itemize}
		\item[(a)] The zero ideal of $ R $ is an anti fixed-place ideal.
		\item[(b)] $ \Gamma(R) $ is triangulated.
		\item[(c)] $ \Mi(R) $ does not have any isolated point.
	\end{itemize}
\label{Gamma R is triangulated}
\end{Thm}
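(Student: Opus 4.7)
The plan is to split the equivalence into (a) $\Leftrightarrow$ (c), which is precisely \cite[Corollary 4.3]{aliabad2013fixed} (already invoked in Theorem \ref{radius = 3}), together with (b) $\Leftrightarrow$ (c), which is the substantive content. For the latter I would apply Proposition \ref{triangle}(a) with the choice $Y = \Mi(R)$; since $R$ is reduced we have $\bigcap \Mi(R) = \{0\}$, so the proposition applies. Under this choice, $\Gamma(R)$ is triangulated if and only if no vertex $a \in Z(R)^*$ has $h_m(a)$ a singleton.

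For the implication (c) $\Rightarrow$ (b), I would argue contrapositively: if $\Gamma(R)$ is not triangulated then some $a \in Z(R)^*$ is not a vertex of any triangle, and Proposition \ref{triangle}(a) forces $h_m(a) = \{P\}$ for some $P \in \Mi(R)$. Since $h_m(a)$ is clopen in $\Mi(R)$ by the base-of-clopen-sets property recorded in the introduction, the singleton $\{P\}$ is open, so $P$ is an isolated point of $\Mi(R)$.

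The converse (b) $\Rightarrow$ (c) is slightly more delicate. Given an isolated $P \in \Mi(R)$, the set $\{P\}$ is open, and since $\{h_m(b) : b \in R\}$ is a base of clopen sets for $\Mi(R)$, there exists $a \in R$ with $h_m(a) = \{P\}$. To apply Proposition \ref{triangle}(a) I must verify $a \in Z(R)^*$. Nonzeroness follows from $\{P\} \neq \Mi(R)$, using that the existence of any vertex of $\Gamma(R)$ forces $|\Mi(R)| \geq 2$. For $\An(a) \neq \{0\}$ I would invoke the classical fact that in a reduced ring every minimal prime consists of zero-divisors: since $a \in P$, there exists $b \in R \setminus P$ with $ab = 0$, and such $b$ is necessarily nonzero, so $b \in \An(a) \setminus \{0\}$. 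Then Proposition \ref{triangle}(a) says $a$ is not a vertex of any triangle, so $\Gamma(R)$ is not triangulated. The only subtle step throughout is this production of the witness $a \in Z(R)^*$ from an isolated $P$; the remainder is a mechanical translation through Proposition \ref{triangle} and the cited \cite[Corollary 4.3]{aliabad2013fixed}.
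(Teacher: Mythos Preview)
Your argument is correct and organized a bit differently from the paper's. The paper runs the cycle (a) $\Rightarrow$ (b) $\Rightarrow$ (c) $\Rightarrow$ (a): for (a) $\Rightarrow$ (b) it argues contrapositively that a non-triangle vertex $a$ gives $h_m(a)=\{P\}$ and then passes to some $b$ with $h_m^c(b)=\{P\}$, so $\An(b)=P\in\B(R)$; for (b) $\Rightarrow$ (c) it simply invokes \cite[Theorem 3.1]{samei2007zero}; and (c) $\Rightarrow$ (a) is \cite[Corollary 4.3]{aliabad2013fixed}. You instead split into (a) $\Leftrightarrow$ (c) and (b) $\Leftrightarrow$ (c). Your ``not (b) $\Rightarrow$ not (c)'' is essentially the first half of the paper's (a) $\Rightarrow$ (b), stopping as soon as $\{P\}$ is seen to be open rather than continuing to $\B(R)$. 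The genuinely different piece is your ``not (c) $\Rightarrow$ not (b)'': where the paper cites Samei, you give a direct construction of a witness $a\in Z(R)^*$ with $h_m(a)=\{P\}$ from an isolated $P$, using the clopen base and the standard fact that minimal primes in a reduced ring consist of zero-divisors. This makes your argument independent of \cite{samei2007zero} at the modest cost of the extra verification that $a\in Z(R)^*$, which you carry out correctly. Both routes are short; yours is slightly more self-contained.
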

\begin{proof}
	(a) $ \Rightarrow $ (b). Suppose that $ \Gamma(R) $ is not triangulated, then $ a \in Z(R)^* $ exists such that $ a $ is not a vertex of any triangle, so by Proposition \ref{triangle}, $ h_m(a) $ is singleton, hence there is a $ P \in \Mi(R) $ such that $ h_m(a) = \{ P \} $. Since $ h_m(a) $ is open and $ \{ h_m^c(x) : x \in R \} $ is base for $ Y $, there is some $ b \in R $ such that $ P \in h_m^c(b) \subseteq h_m(a) = \{ P \} $, thus $ h_m^c(b) = \{ P \} $ and therefore $ \An(b) = P $. It shows that $ P $ is affiliated prime ideal, hence $ P \in \mathcal{B}(R) \neq \emptyset $ and consequently the zero ideal is not an anti fixed-place ideal.
	
	(b) $ \Rightarrow $ (c). By \cite[Theorem 3.1]{samei2007zero}, $ \Sp(R) $ does not have any quasi-isolated point, i.e., $ \Mi(R) $ does not have any isolated point.
	
	(c) $ \Rightarrow $ (a). It concludes from \cite[Corollary 4.3]{aliabad2013fixed}.
\end{proof}

\begin{Thm}
	The following statements are equivalent.
	\begin{itemize}
		\item[(a)] The zero ideal of $ R $ is an anti fixed-place ideal.
		\item[(b)] $ \AG(R) $ is triangulated.
		\item[(c)] $ \Mi(R) $ does not have any isolated point.
	\end{itemize}
\label{AG R is triangulated}
\end{Thm}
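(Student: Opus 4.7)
The plan is to mimic the structure of the preceding Theorem \ref{Gamma R is triangulated}: establish the cycle (c) $\Rightarrow$ (b) $\Rightarrow$ (c) and invoke \cite[Corollary 4.3]{aliabad2013fixed} for (a) $\Leftrightarrow$ (c), exactly as in the zero-divisor-graph version. The central machinery will be the criterion of Proposition \ref{triangle}(b): an ideal $I \in \A{R}$ is a vertex of some triangle of $\AG(R)$ \ff the interior $h_m(I)^\circ$ is not a singleton in $\Mi(R)$.

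For (c) $\Rightarrow$ (b), I would take an arbitrary $I \in \A{R}$ and show that $h_m(I)^\circ$ contains at least two points. Since $I$ is annihilating, Proposition \ref{elements}(d) yields $\overline{h_m^c(I)} \neq \Mi(R)$, whence $h_m(I)^\circ = \Mi(R) \setminus \overline{h_m^c(I)}$ is a nonempty open set in $\Mi(R)$. Because $\Mi(R)$ is Hausdorff (by the Henriksen–Jerison theorem cited in the introduction) and by assumption contains no isolated point, every nonempty open set has cardinality at least two. Proposition \ref{triangle}(b) then places $I$ in a triangle.

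For (b) $\Rightarrow$ (c), I would argue the contrapositive: assuming $\Mi(R)$ has an isolated point $P$, exhibit a vertex of $\AG(R)$ that sits in no triangle. Since $\{h_m^c(x) : x \in R\}$ is a base of clopen sets for $\Mi(R)$, isolation of $P$ yields some $a \in R$ with $h_m^c(a) = \{P\}$, so that $\An(a) = kh_m^c(a) = P$. In particular $aP = 0$ with $a \neq 0$, so $\An(P) \neq \{0\}$ and hence $P$ itself belongs to $\A{R}$. Being a minimal prime, $P$ satisfies $h_m(P) = \{P\}$, so $h_m(P)^\circ = \{P\}$ is a singleton and Proposition \ref{triangle}(b) forces $P$ to lie in no triangle, contradicting triangulation.

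The only delicate step is the production of the element $a$ witnessing $P \in \A{R}$; once this is in hand the two implications are essentially one-line consequences of the interior-criterion. Since the same external corollary \cite[Corollary 4.3]{aliabad2013fixed} bridges (a) and (c), the theorem follows, in close parallel to the $\Gamma(R)$-analogue above.
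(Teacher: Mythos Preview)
Your proposal is correct and follows essentially the same route as the paper: both arguments exhibit the minimal prime $P$ itself as a vertex of $\AG(R)$ lying in no triangle (via $h_m(P)^\circ=\{P\}$ and Proposition~\ref{triangle}(b)) to show that an isolated point of $\Mi(R)$ obstructs triangulation, and both invoke \cite[Corollary 4.3]{aliabad2013fixed} for the equivalence (a)~$\Leftrightarrow$~(c). The only cosmetic differences are that the paper runs the cycle through (a) rather than (c), argues (a)~$\Rightarrow$~(b) by contrapositive (as in Theorem~\ref{Gamma R is triangulated}) rather than your direct ``no isolated points $\Rightarrow$ every nonempty open set has $\geq 2$ points'' observation, and computes $h_m(P)^\circ$ via $\overline{h_m^c(P)}$ instead of using your cleaner remark that $h_m(P)=\{P\}$ automatically for $P\in\Mi(R)$.
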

\begin{proof}
	(a) $ \Rightarrow $ (b). It is similar to proof of the part (a) $ \Rightarrow $ (b) of the previous theorem.
	
	(b) $ \Rightarrow $ (a). Suppose that the zero ideal of $ R $ is not an anti fixed-place ideal. Then $ P \in \mathcal{B}(R) \neq \emptyset $ exists, hence $ P $ is a affiliated prime ideal, so there is some $ a \in R $ such that $ \An(a) = P $, thus $ h_m^c(a) = \{ P \} $. This implies that $ \{ P \} $ is open in $ \Mi(R) $, therefore $ h_m^c(P) = \Mi(R) \setminus \{ P \} $ is closed and consequently $ \overline{h_m^c(P)} = \Mi(R) \setminus \{ P \} $. Thus $ h_m(P)^\circ = \left( \overline{h_m^c(P)} \right)^c = \{ P \}    $. Now Proposition \ref{triangle}, concludes that $ P $ is not a vertex of any triangle and therefore $ \AG(R) $ is not triangulated.
	
	(a) $ \Leftrightarrow $ (c). It is clear, by \cite[Corollary 4.3]{aliabad2013fixed}.
\end{proof}

In the \cite[Corollary 3.3]{samei2007zero}, it has been asserted that ``Let $R$ be a reduced ring and let $ \Sp(R)$ be finite. Then $ \Gamma(R)$ is a triangulated graph if and only if
$\Sp(R)$ has no isolated points.''. If $ \Sp(R) $ is finite, then $ \Mi(R) $ is finite, so the zero ideal of $ R $ is fixed-place and therefore it is not anti fixed-place, hence by the above theorem $ \Gamma(R) $ is not triangulated. Hence the assumption ``$ \Gamma(R)$ is a triangulated graph'' in this assertion is impossible.

Now we can conclude the following corollary from the above theorems.

\begin{Cor}
	$ \Gamma(R) $ is triangulated, \ff $ \AG(R) $ is triangulated.
\end{Cor}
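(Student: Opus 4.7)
The plan is simply to chain together the two preceding triangulation theorems through their common pivot condition. Theorem \ref{Gamma R is triangulated} shows that $ \Gamma(R) $ is triangulated if and only if the zero ideal of $ R $ is an anti fixed-place ideal (equivalently, $ \Mi(R) $ has no isolated point), while Theorem \ref{AG R is triangulated} establishes precisely the same biconditional with $ \Gamma(R) $ replaced by $ \AG(R) $. Transitivity of the biconditional through this shared algebraic/topological characterization yields the corollary immediately.

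Concretely, I would write: $ \Gamma(R) $ is triangulated iff the zero ideal of $ R $ is an anti fixed-place ideal (Theorem \ref{Gamma R is triangulated}, (a)$\Leftrightarrow$(b)) iff $ \AG(R) $ is triangulated (Theorem \ref{AG R is triangulated}, (a)$\Leftrightarrow$(b)). One could equally route the argument through the equivalent condition that $ \Mi(R) $ has no isolated point; the conclusion is the same either way.

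There is no genuine obstacle here, since all the real work has already been absorbed into the two theorems just proved. In particular, the nontrivial ingredients---the use of Proposition \ref{triangle} to detect triangles via nonsingleton fibres in $ h_m $, the extraction of a Bourbaki associated prime from a quasi-isolated point of $ \Mi(R) $, and \cite[Corollary 4.3]{aliabad2013fixed} linking anti fixed-place with absence of isolated minimal primes---were invoked inside the two preceding proofs and need not be repeated. The corollary itself is therefore a one-line consequence and should be stated as such.
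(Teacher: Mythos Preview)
Your argument is correct and matches the paper's own treatment exactly: the corollary is stated as an immediate consequence of Theorems \ref{Gamma R is triangulated} and \ref{AG R is triangulated}, with no additional proof given. Routing the equivalence through the anti fixed-place condition (or the absence of isolated points in $\Mi(R)$) is precisely the intended one-line deduction.
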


Now we can conclude easily from Theorem \ref{Gamma R is triangulated} and \cite[Corollary 5.4]{aliabad2013fixed}, that $ \Gamma(X) $ is triangulated, \ff $ X $ does not have any isolated point. This fact has been shown in \cite[Proposition 2.1]{azarpanah2005zero}. Also, we can conclude easily from Theorem \ref{AG R is triangulated} and \cite[Corollary 5.4]{aliabad2013fixed}, that $ \AG(X) $ is triangulated, \ff $ X $ does not have any isolated point. This fact also has been shown in \cite[Theorem 4.5]{badie2020annihilating}. 

If $ \Mi(R) $ is finite, then the zero ideal of $ R $ is fixed-place and therefore it is not anti fixed-place, hence, by Corollary \ref{radius = 2}, $ \Ra \Gamma(R) = \Ra \AG(R) = 2 $.

Suppose that $ D $ is an integral domain and $ R $ be an arbitrary ring. Then $ \{0\} \times R \in \mathcal{B}( D \times R ) \neq \emptyset $, so the zero ideal of $ D \times R $ is not an anti fixed-place ideal, thus, by Corollary \ref{radius = 2} and Theorems \ref{Gamma R is triangulated} and \ref{AG R is triangulated}, $ \Ra \Gamma(R) = \Ra \AG(R) = 2 $ and the graphs $ \AG(R) $ and $ \Gamma(R) $ are not triangulated.

\section{Domination number}

The main purpose of this section is studying of domination number of $ \AG(R) $ and then $ \AG(X) $. In this studying, we employ the Bourbaki associated prime divisor of the zero ideal and the fixed-place ideal notion.

\begin{Lem}
	Let $ I $ be an ideal in $ \A{R} $. The following statements are equivalent.
	\begin{itemize}
		\item[(a)] $ I $ is prime.
		\item[(b)] $ I $ is a maximal element of $ \A{R} $.
		\item[(c)] $ I $ is a Bourbaki associated prime divisor of the zero ideal of $ R $.
	\end{itemize}
\label{element of B(R)}
\end{Lem}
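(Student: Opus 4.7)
The plan is to prove the cyclic chain (c) $\Rightarrow$ (a) $\Rightarrow$ (b) $\Rightarrow$ (c). The implication (c) $\Rightarrow$ (a) is immediate since a Bourbaki associated prime divisor is by definition a prime ideal. For (a) $\Rightarrow$ (b), I would argue by contradiction: assume $I$ is prime and properly contained in some $J \in \A{R}$, then pick $y \in J \setminus I$ and $0 \neq c \in \An(J)$. Since $I \subseteq J$ gives $cI = 0$, we have $c \in \An(I)$. The reduced hypothesis then forces $c \notin I$, because $c \in I \cap \An(I)$ would imply $c^2 = 0$ and hence $c = 0$. Primeness applied to $cy = 0 \in I$ now forces $y \in I$, contradicting the choice of $y$.

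For (b) $\Rightarrow$ (c), the idea is first to realize $I$ as an element annihilator and then to verify primeness by a second appeal to maximality. Picking $0 \neq a \in \An(I)$ gives $I \subseteq \An(a)$, with $\An(a) \in \A{R}$ (it is nonzero because it contains $I$, and its own annihilator contains $a$), so maximality yields $I = \An(a) = (\{0\} : a)$. To show $I$ is prime, suppose $xy \in \An(a)$ with $x \notin \An(a)$; then $xa \neq 0$ and $za \cdot x = x(za) = 0$ for every $z \in I$, so $I \subseteq \An(xa) \in \A{R}$. A second application of maximality gives $I = \An(xa)$, and therefore $y \in I$, so $I$ is prime and of the form $(\{0\}:a)$, as required.

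The main obstacle is a bookkeeping one rather than a conceptual one: at each step I must verify that the auxiliary annihilator ideals produced actually lie in $\A{R}$, i.e., are both nonzero and have nonzero annihilator. This is handled uniformly because every ideal that appears is sandwiched with the fixed nonzero ideal $I$ (which itself has nonzero annihilator), so both conditions transfer automatically.
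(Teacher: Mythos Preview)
Your proof is correct and follows essentially the same cyclic route as the paper: the paper proves (a) $\Rightarrow$ (b) $\Rightarrow$ (c) $\Rightarrow$ (a) with the same ideas---use a nonzero annihilator element and reducedness for (a) $\Rightarrow$ (b), and use maximality twice (first to get $I = \An(a)$, then to get $I = \An(ax)$) for (b) $\Rightarrow$ (c). If anything, you are more careful than the paper, which asserts ``there is some $0 \neq a \in R$ such that $\An(a) = I$'' without spelling out the first appeal to maximality that you make explicit.
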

\begin{proof}
	(a) $ \Rightarrow $ (b). Suppose that $ I \subseteq J $ and $ J \in \A{R} $, thus $ 0 \neq a \in \An(J) $ exists. Since $ R $ is a reduced ring, $ a \notin J $, then $ a \notin I $ and $ a J \subseteq I $, thus $ J \subseteq I $, hence $ I = J $. Consequently, $ I $ is a maximal element of $ \A{R} $.
	
	(b) $ \Rightarrow $ (c). Since $ I \in \A{R} $, there is some $ 0 \neq a \in R $ such that $ \An(a) =  I $. Suppose that $ xy \in I $ and $  x \notin I $, then $ I = \An(a) \subseteq \An(ax) $, so $ y \in \An(ax) \subseteq  \An(a) = I $, by the maximality of $ I $, hence $ I $ is prime, and therefore $ I $ is a Bourbaki associated prime divisor of the zero ideal. 
	
	(c) $ \Rightarrow $ (a). It is clear.
\end{proof}

\begin{Pro}
	The following statements hold.
	\begin{itemize}
		\item[(a)] Suppose that $ I \in \A{R} $. $ I $ is contained in some maximal element of $ \A{R} $, \ff $ \Mi(I) \cap \mathcal{B}(R) \neq \emptyset $.
		\item[(b)] Every element of $ \A{R} $ is contained in some maximal element of $ \A{R} $, \ff the zero ideal of $ R $ is a fixed-place ideal.
		\item[(c)] $ \A{R} $ does not have any maximal element, \ff the zero ideal of $ R $ is an anti fixed-place ideal.
	\end{itemize}	
\label{fixed-place and maximal}
\end{Pro}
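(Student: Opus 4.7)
My plan is to reduce each part to Lemma~\ref{element of B(R)}, which identifies the maximal elements of $\A{R}$ with $\mathcal{B}(R)$. Granted this, part~(c) is essentially a restatement, since $\A{R}$ having no maximal element amounts to $\mathcal{B}(R) = \emptyset$, which is exactly the anti fixed-place condition.

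For part~(a), the forward direction takes a maximal element $P \supseteq I$ of $\A{R}$; by Lemma~\ref{element of B(R)}, $P \in \mathcal{B}(R) \subseteq \Mi(R)$, and since $P$ is already minimal among all primes of $R$, it is automatically minimal among primes containing $I$, so $P \in \Mi(I) \cap \mathcal{B}(R)$. The reverse direction is immediate from Lemma~\ref{element of B(R)}: any $P \in \Mi(I) \cap \mathcal{B}(R)$ contains $I$ and is a maximal element of $\A{R}$.

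For part~(b), the easy direction uses part~(a) together with the hypothesis $\bigcap \mathcal{B}(R) = \{0\}$: for $I \in \A{R}$ pick $0 \neq a \in \An(I)$; some $P \in \mathcal{B}(R)$ must then exclude $a$, and $aI = \{0\} \subseteq P$ with $a \notin P$ forces $I \subseteq P$ by primeness. For the converse I would argue by contradiction: assume every element of $\A{R}$ sits below a maximal one, but some $0 \neq a \in \bigcap \mathcal{B}(R)$ exists. Pick any $P = \An(x) \in \mathcal{B}(R)$ with $x \neq 0$; since $a \in P$, we have $ax = 0$, so $x \in \An(a)$, confirming $\An(a) \in \A{R}$. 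By hypothesis, $\An(a) \subseteq P' = \An(y)$ for some $y \neq 0$; but $a \in P' = \An(y)$ yields $y \in \An(a) \subseteq \An(y)$, whence $y^2 = 0$, contradicting that $R$ is reduced.

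The main obstacle is the contradiction in the converse of part~(b): the self-annihilation $y^2 = 0$ only becomes visible after unpacking the concrete form $P' = \An(y)$ of a Bourbaki associated prime and combining it with both $\An(a) \subseteq P'$ and $a \in P'$. Everything else is straightforward bookkeeping around Lemma~\ref{element of B(R)} and the inclusion $\mathcal{B}(R) \subseteq \Mi(R)$.
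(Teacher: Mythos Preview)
Your proof is correct, and for parts~(a), (c), and the reverse implication of~(b) it coincides with the paper's argument essentially word for word.

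For the forward implication of~(b) you take a genuinely different route. The paper argues via the formula $\An(a) = \bigcap_{a \notin P \in \Mi(R)} P$ and then uses that any $P_\circ \in \mathcal{B}(R)$ is irredundant with respect to the zero ideal: from $\An(a) \subseteq P_\circ$ it derives $\bigcap_{P_\circ \neq P \in \Mi(R)} P \subseteq P_\circ$, hence $\bigcap_{P_\circ \neq P \in \Mi(R)} P = \{0\}$, contradicting irredundancy. Your argument is more elementary and self-contained: you never invoke the minimal-prime description of $\An(a)$ or the irredundancy characterisation of $\mathcal{B}(R)$, but instead exploit directly that the maximal element $P'$ containing $\An(a)$ has the concrete form $\An(y)$, and then the two memberships $a \in P'$ and $y \in \An(a) \subseteq P'$ collapse to $y^2 = 0$. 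This buys you a shorter proof that stays entirely inside the definitions, at the cost of being slightly less informative about the structure of $\Mi(R)$. One small point worth making explicit: your choice of some $P = \An(x) \in \mathcal{B}(R)$ presupposes $\mathcal{B}(R) \neq \emptyset$, which follows because $\A{R} \neq \emptyset$ together with the hypothesis forces a maximal element to exist; this is harmless but could be stated.
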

\begin{proof}
	(a$ \Rightarrow $). By Lemma \ref{element of B(R)}, $ P \in \mathcal{B}(R) $ exists such that $ I \subseteq P $, since $ P \in \Mi(R) $, it follows that $ P \in \Mi(I) $ and therefore $ P \in \mathcal{B}(R) \cap \Mi(R) \neq \emptyset $.
	
	(a$ \Leftarrow $). It is clear, by Lemma \ref{element of B(R)}.
	
	(b$ \Rightarrow $). On contrary, suppose that $ \bigcap_{P \in \mathcal{B}(R)} P \neq \{ 0\} $, so there is some $ 0 \neq a \in \bigcap_{P \in \mathcal{B}(R)} P $. Then 
	\[ \An(a) = (0:a) = \big( \bigcap_{P \in \Mi(R)} P : a \big) = \bigcap_{a \notin P \in \Mi(R)} P  \]
	By the assumption, there is some $ P_\circ \in \mathcal{B}(R) $ such that $ \An(a) \subseteq P_\circ $, then $ \bigcap_{a \notin P \in \Mi(R)} P \subseteq P_\circ $, and therefore 
	\[\bigcap_{P_\circ \neq P \in \Mi(R)} P \subseteq \bigcap_{a \notin P \in \Mi(R)} P \subseteq P_\circ \] 
	\[ \quad \Rightarrow \quad \{ 0 \} = \Big( \bigcap_{P_\circ \neq P \in \Mi(R)} P \Big) \cap P_\circ = \bigcap_{P_\circ \neq P \in \Mi(R)} P \]
	which is a contradiction.
	
	(b$ \Leftarrow $). By the assumption, $ \bigcap_{P \in \mathcal{B}(R)} P = \{ 0 \} $. So
	\[ \An(I) = ( 0 : I ) = \big( \bigcap_{P \in \mathcal{B}(R)} P : I \big) = \bigcap_{P \in \mathcal{B}(R)} ( P : I ) = \bigcap_{I \not\subseteq P \in \mathcal{B}(R)} P \]
	Hence $ P \in \mathcal{B}(R) $ exists such that $  I \subseteq P $ and thus, by Lemma \ref{element of B(R)}, it completes the proof.
	
	(c). It is evident, by Lemma \ref{element of B(R)}.
\end{proof}

In the proof of \cite[Theorem 2.2]{nikandish2015domination} It has been asserted that ``By Zorn's Lemma, it is clear that if $ \A{R} \neq \emptyset $, then $ \A{R} $ has a maximal element''. But by the above proposition, we know that if the zero ideal of a ring $ R $ is anti fixed-place, then $ \A{R} $ does not have any maximal element. For example, since $ \R $ does not have any isolated point, by \cite[Corollary 5.4]{aliabad2013fixed}, the zero ideal of $ C(\R) $ is an anti fixed-place ideal and therefore $ \B(C(\R)) = \emptyset $. In this case, $ M = \B(C(\R)) = \emptyset $, so \cite[Theorem 2.2]{nikandish2015domination} is not true in general.

\begin{Thm}
	For each ring $ R $, 
	\begin{itemize}
		\item[(a)] $ |\B(R)|  \leqslant \dt_t(\AG(R)) $.
		\item[(b)] If $ | \Mi(R) | > 2 $, then $ |\B(R)| \leqslant \dt(\AG(R)) $.
	\end{itemize}
\label{B(R) < dt}
\end{Thm}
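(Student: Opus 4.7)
The plan is to inject $\B(R)$ into any (total) dominating set of $\AG(R)$ via the rule ``send $P$ to a vertex of the set that dominates $P$''. By Lemma \ref{element of B(R)} each $P \in \B(R)$ is a (non-zero) prime ideal in $\A{R}$, so it is a genuine vertex of $\AG(R)$. The key computation, valid in the reduced ring $R$, is that for every minimal prime $P$
\[ \An(P) = \bigcap_{L \in \Mi(R)\setminus\{P\}} L, \]
because $xP = \{0\}$ means $xP \subseteq \bigcap \Mi(R) = \{0\}$, and for each $L \in \Mi(R)$ primality forces $x \in L$ or $P \subseteq L$, with the latter giving $L = P$ by incomparability of minimal primes. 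Consequently, for distinct $P,Q \in \B(R) \subseteq \Mi(R)$,
\[ \An(P) \cap \An(Q) = \bigcap_{L \in \Mi(R)} L = \{0\}. \]

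For (a), let $D$ be a total dominating set of $\AG(R)$. For each $P \in \B(R)$ choose $I_P \in D$ adjacent to $P$. If distinct $P, Q \in \B(R)$ had $I_P = I_Q = I$, then $IP = IQ = \{0\}$ would give $I \subseteq \An(P) \cap \An(Q) = \{0\}$, contradicting $I \in \A{R}$. Hence $P \mapsto I_P$ is injective and $|\B(R)| \leq |D|$, proving $|\B(R)| \leq \dt_t(\AG(R))$.

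For (b), let $D$ be a dominating set of $\AG(R)$. Define $I_P = P$ when $P \in D$, and otherwise pick $I_P \in D$ adjacent to $P$. When neither $I_P = P$ nor $I_Q = Q$, the argument of (a) shows $I_P \neq I_Q$. The remaining case, after symmetry, is $I_P = P$ and $I_P = I_Q$: then either $I_Q = Q$, forcing $P = Q$, a contradiction, or $I_Q$ is adjacent to $Q$, so $P = I_Q$ is adjacent to $Q$ in $\AG(R)$. In that case $PQ = \{0\}$ gives $P \subseteq \An(Q) = \bigcap_{L \in \Mi(R)\setminus\{Q\}} L$, so $P \subseteq L_0$ for any $L_0 \in \Mi(R) \setminus \{P,Q\}$, which exists by $|\Mi(R)| > 2$; this contradicts the incomparability of the distinct minimal primes $P$ and $L_0$. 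Thus $P \mapsto I_P$ is injective and $|\B(R)| \leq \dt(\AG(R))$.

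The one delicate step is the final case analysis in (b), and the hypothesis $|\Mi(R)| > 2$ is genuinely needed there: when $|\Mi(R)| = 2$ the two (necessarily isolated) minimal primes satisfy $PQ = \{0\}$, so one can be a single-vertex dominator of the other, and the intended injection $P \mapsto I_P$ collapses.
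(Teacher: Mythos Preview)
Your proof is correct and follows the same overall strategy as the paper: inject $\B(R)$ into an arbitrary (total) dominating set $D$ via $P \mapsto I_P$, then verify injectivity case by case. The only notable difference is in how injectivity is checked in the ``both adjacent'' case: the paper uses Lemma~\ref{element of B(R)} to argue that $P \subseteq \An(I_P)$ forces $P = \An(I_P)$ by maximality of $P$ in $\A{R}$, whence $I_P = I_Q$ gives $P = \An(I_P) = \An(I_Q) = Q$; you instead compute $\An(P) = \bigcap_{L \in \Mi(R)\setminus\{P\}} L$ directly and use $\An(P)\cap\An(Q)=\{0\}$ to force $I_P = I_Q \Rightarrow I_P = \{0\}$, a contradiction. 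Both arguments are short and equivalent in strength; your explicit formula for $\An(P)$ also streamlines the mixed case in (b), where the paper reaches the same contradiction by noting that $PQ=\{0\}$ forces every minimal prime to equal $P$ or $Q$.
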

\begin{proof}
	(a). Suppose that $ D $ is a total dominating set of $ \AG(R) $. For each $ P \in \B(R) $, there is some $ I_P \in D $, such that $ I_P $ is adjacent to $ P $, so $ P I_P = \{ 0 \}  $, thus $ P \subseteq \An(I_P) $, hence $ P = \An(I_P) $, by Lemma \ref{element of B(R)}. Now suppose that $ I_P = I_Q $, for some $ P , Q \in \B(R) $, then $ P = \An(I_P) = \An(I_Q) = Q $ and thus the map $ P \rightsquigarrow I_P $ is one-to-one. This implies that $ |\B(R)| \leqslant |D| $ and consequently $ |\B(R)| \leqslant \dt_t(\AG(R)) $.
	
	(b). Let $ D $ be a dominating set. For each $ P \in \B(R) $, if $ P \in D  $, then we set $ K_P = P $ and if $ P \notin D $, there is some $ K_P \in D $ such that $ K_P $ is adjacent to $ P $. Suppose that $ K_P = K_Q $, for some $ P , Q \in \B(R) $. If $ P , Q \in D $, then $ P = K_P = K_Q = Q $. If $ P, Q \notin D $, then $ P $ and $ Q $ are adjacent to $ K_P $ and $ K_Q $, respectively, so $ P K_P = Q K_Q = \{ 0 \} $, thus $ P \subseteq \An(K_P) $ and $ Q \subseteq \An(K_Q) $ and therefore $ P = \An(K_P) = \An(K_Q) = Q $, by Lemma \ref{element of B(R)}. Finally, without loss of generality, we assume  $ P \in D $ and $ Q \notin D $, then $ P = K_P $ and $ K_Q $ is adjacent to $ Q $, so $ P  $ is adjacent to $ Q $ and thus $ P Q = \{ 0 \} $. Hence for each $ P' \in \Mi(R) $, $ P Q = \{ 0 \} \subseteq P' $, and therefore either $ P \subseteq P' $ or $ Q \subseteq P' $, so, by Lemma \ref{element of B(R)}, either $ P = P' $ or $ Q = P' $. This implies that $ | \Mi(R) | \leqslant 2 $, which contradicts the assumption. Consequently, the map $ P \rightsquigarrow K_P $ is one-to-one and thus $ |\B(R)| \leqslant \dt(\AG(R))$.
\end{proof}

\begin{Thm}
	If the zero ideal of $ R $ is a fixed-place ideal, then 
	\begin{itemize}
		\item[(a)] $ \dt_t(\AG(R)) = |\B(R)| $.
		\item[(b)] If $ | \Mi(R) | > 2 $, then $ \dt(\AG(R)) = |\B(R)| $.
	\end{itemize}
\label{fixed-place and dt}
\end{Thm}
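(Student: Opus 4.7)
The plan is to combine the lower bound from Theorem \ref{B(R) < dt} with an explicit construction of a total dominating set of $\AG(R)$ of cardinality $|\B(R)|$. Since every total dominating set is also a dominating set, one and the same construction will serve for both parts.

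For each $P\in\B(R)$, the definition of a Bourbaki associated prime divisor supplies some $a_P\in R\setminus\{0\}$ with $\An(a_P)=P$; the construction is to set $I_P:=\Ge{a_P}$ and $D:=\{I_P:P\in\B(R)\}$. Two quick verifications are needed up front: first, $P\cdot a_P=\{0\}$ shows $\An(I_P)=P\neq\{0\}$, so $I_P\in\A{R}$; second, the map $P\mapsto I_P$ is injective because $\An(I_P)=P$ recovers $P$ from $I_P$. Hence $|D|=|\B(R)|$.

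The heart of the argument is to show that $D$ totally dominates $\AG(R)$. Given an arbitrary $J\in\A{R}$, the fixed-place hypothesis together with Proposition \ref{fixed-place and maximal}(b) provides a maximal element of $\A{R}$ containing $J$, which by Lemma \ref{element of B(R)} coincides with some $Q\in\B(R)$. Then $J\cdot I_Q\subseteq Q\cdot a_QR=\{0\}$, so $J$ and $I_Q$ annihilate each other. The only delicate point, and the main obstacle, is the distinctness requirement $J\neq I_Q$ demanded by the definition of adjacency. The plan is to handle it via the observation $a_Q\notin Q$: otherwise $a_Q\in\An(a_Q)$ would give $a_Q^2=0$, and reducedness of $R$ would force $a_Q=0$, contradicting the choice of $a_Q$. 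Since $J\subseteq Q$ while $a_Q\in I_Q\setminus Q$, the vertices $J$ and $I_Q$ must differ, and therefore they are adjacent in $\AG(R)$.

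Putting $|D|=|\B(R)|$ together with Theorem \ref{B(R) < dt}(a) yields $\dt_t(\AG(R))=|\B(R)|$, proving (a). For part (b), since $D$ is in particular an ordinary dominating set, $\dt(\AG(R))\leqslant|D|=|\B(R)|$; combined with the reverse inequality from Theorem \ref{B(R) < dt}(b) under the hypothesis $|\Mi(R)|>2$, this gives the claimed equality.
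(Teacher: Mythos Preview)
Your proof is correct and follows essentially the same route as the paper: construct $D=\{Ra_P:P\in\B(R)\}$ with $\An(a_P)=P$, use Proposition \ref{fixed-place and maximal}(b) to place any $J\in\A{R}$ inside some $Q\in\B(R)$ so that $J\cdot Ra_Q=\{0\}$, and combine with Theorem \ref{B(R) < dt}. You are in fact more careful than the paper in verifying the distinctness $J\neq I_Q$ required for adjacency (via $a_Q\notin Q$, using reducedness), a point the paper leaves implicit; your injectivity check on $P\mapsto I_P$ is harmless but not strictly needed, since only $|D|\leqslant|\B(R)|$ is required for the upper bound.
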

\begin{proof}
	(a). By the above theorem it is sufficient to show that $ \dt_t(\AG(R)) \leqslant | \B(R) | $. For every $ P \in \B(R) $, pick $ a_{_P} \in R $, such that $ \An(a_{_P}) = P $. For each $ K \in \A{R} $, by the assumption and Proposition \ref{fixed-place and maximal}, there is some $ P \in \B(R) $ such that $ K \subseteq P = \An(a_{_P}) $, so $ Ra_{_P} K = \{ 0 \} $ and therefore $ K $ is adjacent to $ Ra_{_P} $. This implies that $ \{ Ra_{_P} : P \in \B(R) \} $ is a dominating set and consequently, $ \dt_t(\AG(R)) \leqslant | \B(R) | $.
	
	(b). By the fact that $ \dt(\AG(R)) \leqslant \dt_t(\AG(R)) $, it follows from (a) and the above theorem.
\end{proof}

We know that if $ \Mi(R) $ is finite, then the zero ideal of $ R $ is a fixed-place ideal and $  \Mi(R) = \B(R) $. Thus \cite[Theorem 2.4 and Theorem 2.5]{nikandish2014dominating} and \cite[Theorem 2.4 and Theorem 2.5]{nikandish2015domination} are immediate consequences of the above theorem. Also, we can conclude the following corollary from the above theorem and \cite[Theorems 5.2 and 5.5]{aliabad2013fixed}.

\begin{Cor}
	Suppose $ X  $ is an almost discrete space. Then 
	\begin{itemize}
		\item[(a)] $ \dt_t(\AGC) = | I(X) | $.
		\item[(b)] If $ | X | > 2 $, then $ \dt(\AGC) = | I(X) | $.
	\end{itemize}
\end{Cor}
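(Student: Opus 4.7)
The plan is to reduce the corollary directly to Theorem \ref{fixed-place and dt} applied to the ring $R = C(X)$. For this I need two pieces of input from the cited \cite[Theorems 5.2 and 5.5]{aliabad2013fixed}: first, that the zero ideal of $C(X)$ is a fixed-place ideal precisely when $X$ is almost discrete; and second, that $\mathcal{B}(C(X))$ is in bijection with $I(X)$ (the Bourbaki associated prime divisors of $\{0\}$ in $C(X)$ are exactly the maximal ideals $M_p=\{f:f(p)=0\}$ for $p\in I(X)$), giving $|\B(C(X))|=|I(X)|$. Granted these, part (a) is an immediate plug-in: Theorem \ref{fixed-place and dt}(a) yields $\dt_t(\AGC)=|\B(C(X))|=|I(X)|$.

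For part (b) the additional hypothesis needed from Theorem \ref{fixed-place and dt}(b) is $|\Mi(C(X))|>2$, whereas the corollary only assumes $|X|>2$. The plan here is to bridge the two by a short topological argument. Since $X$ is Tychonoff, each isolated point $p$ is clopen, so the maximal ideal $M_p$ is also a minimal prime of $C(X)$; hence $|I(X)|\leqslant |\Mi(C(X))|$. It therefore suffices to show $|X|>2$ implies $|I(X)|>2$ in the almost discrete setting. Since each point of $I(X)$ is closed in the Hausdorff space $X$, if $|I(X)|\leqslant 2$ then $I(X)$ is already closed, so $X=\overline{I(X)}=I(X)$ has at most two points, contrary to assumption. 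Thus $|\Mi(C(X))|\geqslant |I(X)|>2$, and Theorem \ref{fixed-place and dt}(b) gives $\dt(\AGC)=|\B(C(X))|=|I(X)|$.

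Almost all of the work is externalised to the cited results from \cite{aliabad2013fixed} and to Theorem \ref{fixed-place and dt}, so there is no real obstacle in the proof; the only point requiring a genuine (if tiny) argument is the implication $|X|>2\Rightarrow|\Mi(C(X))|>2$, handled by the Hausdorff-closedness of isolated points together with density of $I(X)$ in $X$.
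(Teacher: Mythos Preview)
Your proposal is correct and follows the same route as the paper: the paper's ``proof'' is simply the sentence that the corollary follows from Theorem~\ref{fixed-place and dt} together with \cite[Theorems 5.2 and 5.5]{aliabad2013fixed}, and you reproduce exactly this reduction. The only extra content you supply is the bridging step $|X|>2 \Rightarrow |\Mi(C(X))|>2$, which the paper leaves entirely implicit; your argument (finite sets of points are closed in a Hausdorff space, so $|I(X)|\leqslant 2$ would force $X=\overline{I(X)}=I(X)$ to have at most two points, and distinct isolated points yield distinct elements of $\B(C(X))\subseteq\Mi(C(X))$) is sound and fills that gap cleanly.
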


\begin{Thm}
	If the zero ideal of a ring $ R $ is not a fixed-place ideal, then $ \dt(\AG(R)) $ and $ \dt_t(\AG(R)) $ are infinite.
\label{d is infinite}
\end{Thm}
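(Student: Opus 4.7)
The plan is to prove by contradiction that $\dt(\AG(R))$ is infinite, which, since every total dominating set is also a dominating set (so $\dt(\AG(R)) \leqslant \dt_t(\AG(R))$), immediately yields that $\dt_t(\AG(R))$ is infinite as well. Suppose therefore that $D = \{I_1, \ldots, I_n\} \subseteq \A{R}$ is a finite dominating set. First I would show $\B(R)$ must be finite: by Theorem \ref{B(R) < dt}(b) this holds whenever $|\Mi(R)| > 2$, while if $|\Mi(R)| \leqslant 2$ then $\Mi(R)$ is a finite discrete Hausdorff space, forcing $\B(R) = \Mi(R)$ and $\bigcap \B(R) = \bigcap \Mi(R) = \{0\}$, contradicting the hypothesis. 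Since the zero ideal is not fixed-place, either $\B(R) = \emptyset$ or $\bigcap \B(R) \neq \{0\}$; in either case $\B(R) \neq \Mi(R)$, and combined with $\B(R)$ finite in the Hausdorff space $\Mi(R)$ this forces $\Mi(R)$ to be infinite, whence $\Mi(R) \setminus \B(R)$ is infinite.

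Set $U_k = h_m^c(I_k)$. By Proposition \ref{elements} each $U_k$ is non-empty and non-dense in $\Mi(R)$, and by Lemma \ref{adjacent} the dominating condition becomes: for every $J \in \A{R}$, either $J = I_k$ for some $k$, or $h_m^c(J) \cap U_k = \emptyset$ for some $k$. The strategy is to exhibit an ideal $J \in \A{R} \setminus D$ with $h_m^c(J) \cap U_k \neq \emptyset$ for every $k$, contradicting the above. The construction will split into two cases according to whether some $U_k$ meets the non-isolated locus $\Mi(R) \setminus \B(R)$ or every $U_k$ lies entirely in $\B(R)$.

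\emph{Case (a): some $U_k \not\subseteq \B(R)$.} Pick $P_k \in U_k$ for each $k$, choosing $P_k$ non-isolated whenever possible, so that $F = \{P_1, \ldots, P_n\}$ contains at least one non-isolated minimal prime. I claim the family $\mathcal{W}$ of proper clopen subsets of $\Mi(R)$ containing $F$ is infinite. Otherwise $W^* := \bigcap \mathcal{W}$ is the minimum element of $\mathcal{W}$; if $W^* = F$ then $F$ is clopen in a Hausdorff space, forcing every $P_k$ to be isolated, which contradicts our choice; and if $W^* \supsetneq F$, pick $Q^* \in W^* \setminus F$ and peel it off (by removing $\{Q^*\}$ if $Q^*$ is isolated, or by removing a clopen neighborhood of $Q^*$ disjoint from $F$ if not) to obtain a strictly smaller member of $\mathcal{W}$, a contradiction. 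For each $W \in \mathcal{W}$ set $J_W = \bigcap_{Q \in \Mi(R) \setminus W} Q$; then $h_m^c(J_W) = W$ is non-empty and non-dense, so $J_W \in \A{R}$ by Proposition \ref{elements}, and $P_k \in W \cap U_k$ shows $J_W$ is non-adjacent to every $I_k$. Since $J_W = I_k$ forces $W = U_k$, only finitely many of the $J_W$'s can lie in $D$, so infinitely many are undominated, a contradiction.

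\emph{Case (b): every $U_k \subseteq \B(R)$.} Pick two distinct non-isolated minimal primes $Q_0, Q_1 \in \Mi(R) \setminus \B(R)$, and use prime avoidance on the finitely many minimal primes $Q_1$ and the elements of $\B(R)$, all incomparable with the minimal prime $Q_0$, to obtain $b \in Q_0 \setminus \bigl( Q_1 \cup \bigcup_{P \in \B(R)} P \bigr)$. Since $b$ lies in a minimal prime of the reduced ring $R$, it is a zero-divisor and $\langle b \rangle \in \A{R}$; because $\B(R) \subseteq h_m^c(b)$ and $U_k \subseteq \B(R)$, one has $h_m^c(\langle b \rangle) \cap U_k \supseteq U_k \neq \emptyset$ for every $k$, so $\langle b \rangle$ is adjacent to no $I_k$; and $Q_1 \in h_m^c(\langle b \rangle) \setminus \B(R)$, while every $I_k \in D$ satisfies $h_m^c(I_k) \subseteq \B(R)$, so $\langle b \rangle \neq I_k$ for any $k$. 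Hence $\langle b \rangle$ is an undominated vertex, again contradicting $D$. The main obstacle will be Case (b): the topological enlargement strategy of Case (a) breaks down when $F$ is forced into the isolated locus, and one must switch to an algebraic construction using prime avoidance outside $\An(\bigcap \B(R))$.
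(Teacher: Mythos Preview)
Your proof is correct, but it follows a genuinely different route from the paper's. The paper argues purely algebraically: invoking Proposition~\ref{fixed-place and maximal}(b), it picks $J_1 \in \A{R}$ not contained in any maximal element of $\A{R}$, and then builds an infinite strictly ascending chain $J_1 \subseteq K_1 \subset J_2 \subseteq K_2 \subset \cdots$ where each $K_n$ is either an element $I_n$ of the dominating set $D$ or the annihilator of such an element; a short case check shows $n \mapsto I_n$ is injective, so $D$ is infinite. Your argument instead works entirely in the Zariski topology on $\Mi(R)$: you first force $\B(R)$ finite and $\Mi(R)$ infinite, then split according to whether some $U_k = h_m^c(I_k)$ meets the non-isolated locus, producing either infinitely many clopen sets $W \supseteq F$ (hence infinitely many undominated $J_W$'s) or, via prime avoidance, a single principal undominated ideal. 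The paper's chain argument is shorter and uses only the order-theoretic content of Proposition~\ref{fixed-place and maximal}; your approach is longer and requires a case split, but it stays within the topological framework of Section~2 and makes the role of the isolated points of $\Mi(R)$ (equivalently $\B(R)$) explicit throughout, which fits the paper's theme of translating graph properties into Zariski-topological ones.
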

\begin{proof}
	Suppose that $ D $ is a dominating set of $ \AG(R) $. By Proposition \ref{fixed-place and maximal}, there is some $ J_1 \in \A{R} $ which is not contained in a maximal element of $ \A{R} $. If $ J_1 \in D $, then we set $ I_1 = K_1 = J_1 $. If $ J_1 \notin D $, there is some vertex $ I_1 \in D $ which is adjacent to $ J_1 $, then $ J_1 I_1 = \{ 0 \} $, so $ J_1 \subseteq \An(I_1) $, in this case we set $ K_1 = \An(I_1) $. Since $ J_1 $ is not contained in a maximal element of $ \A{R} $ and $ J_1 \subseteq K_1 $, there is some $ J_2 \in \A{R} $ such that $ K_1 \subset J_2 $, similarly we can find $ K_2 \in \A{R} $ in which either $ I_2 = K_2 \in D $ or $ K_2 = \An(I_2) $, for some $ I_2 \in D $. By induction, we have the following
	\[ J_1 \subseteq K_1 \subset J_2 \subseteq K_2 \subset \ldots \subset J_n \subseteq K_n \subset \ldots \]
	Now suppose that $ n \neq m $, then $ K_n  \neq K_m $. Without loss of generality, we assume $ n < m $, hence we have four cases
	\begin{itemize}
		\item[case 1:] If $ I_n = K_n $ and $ I_m = K_m $, then it is evident that $ I_n \neq I_m $.
		\item[case 2:] If $ K_n = \An(I_n) $ and $ K_m = \An(I_m) $, so it is clear that $ I_n \neq I_m $.
		\item[case 3:] If $ K_n = I_n $ and $ K_m = \An(I_m) $, then $ I_n \subset \An(I_m) $, so $ I_n I_m = \{ 0 \} $, hence $ I_n \neq I_m  $, because otherwise, $ I_n^2 = \{ 0 \} $ and therefore $ I_n = \{ 0 \} $, which is a contradiction.
		\item[case 4:] If $ K_n = \An(I_n) $ and $ K_m = I_m $, then $ \An(I_n) \subset I_m $, so $ \An(I_m) \subseteq \An(\An(I_n)) $, hence $ I_n \neq I_m $, because otherwise, similar to case 3, $ \An(I_n) = \{ 0 \} $, which is a contradiction.
	\end{itemize}
	Since $ \{ I_n : n \in \N \} \subseteq D  $, it follows that $ D $ is infinite and consequently $ \dt(\AG(R)) $ is infinite. Hence $ \dt_t(\AG(R)) $ is finite, by this fact that $ \dt(\AG(R)) \leqslant \dt_t(\AG(R)) $.
\end{proof}

Now by the above theorem, $ \dt_t(\AG(C(\R))) $ and $ \dt(\AG(C(\R))) $ are infinite, so the inequality in Theorem \ref{B(R) < dt}, can be proper.

\begin{Cor}
	The following statements are equivalent
	\begin{itemize}
		\item[(a)] $ \dt_t(\AG(R)) $ is finite
		\item[(b)] $ \dt_t(\AG(R)) $ is finite
		\item[(c)] $ \Mi(R) $ is finite
	\end{itemize}
\end{Cor}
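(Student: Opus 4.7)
The plan is to establish the three equivalences via the cycle (b) $\Rightarrow$ (a) $\Rightarrow$ (c) $\Rightarrow$ (b), since the statement is really saying that $\dt(\AG(R))$ is finite iff $\dt_t(\AG(R))$ is finite iff $\Mi(R)$ is finite (as in the abstract). The implication (b) $\Rightarrow$ (a) is immediate from the general graph-theoretic inequality $\dt(G) \leqslant \dt_t(G)$.

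For (c) $\Rightarrow$ (b), I would first observe that when $\Mi(R)$ is finite the zero ideal is automatically fixed-place: writing $\{0\} = \bigcap \Mi(R)$ as a finite intersection of minimal primes together with the inclusion $\B(R) \subseteq \Mi(R)$ and a straightforward minimality argument actually forces $\B(R) = \Mi(R)$. Theorem \ref{fixed-place and dt}(a) then yields $\dt_t(\AG(R)) = |\B(R)| = |\Mi(R)| < \infty$.

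The real content lies in (a) $\Rightarrow$ (c), which I would prove by contrapositive: assume $\Mi(R)$ is infinite and deduce that $\dt(\AG(R))$ is infinite. Split on whether the zero ideal is fixed-place. If it is not, Theorem \ref{d is infinite} immediately finishes the argument. If it is, then $\bigcap \B(R) = \{0\}$, and since $|\Mi(R)| > 2$ automatically, Theorem \ref{fixed-place and dt}(b) gives $\dt(\AG(R)) = |\B(R)|$, so it suffices to verify that $\B(R)$ is infinite. If on the contrary $\B(R) = \{P_1, \ldots, P_n\}$ were finite, then for any $Q \in \Mi(R)$ primality applied to the inclusion $P_1 \cap \cdots \cap P_n = \{0\} \subseteq Q$ would yield some $P_i \subseteq Q$, and minimality of $Q$ would force $Q = P_i$; hence $\Mi(R) \subseteq \B(R)$ would itself be finite, contradicting the standing assumption.

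The only mildly delicate point is checking that the hypothesis $|\Mi(R)| > 2$ of Theorem \ref{fixed-place and dt}(b) is in force; but this is automatic once $\Mi(R)$ is assumed infinite, so the argument is essentially bookkeeping that threads together Theorems \ref{fixed-place and dt} and \ref{d is infinite} with the elementary observation that a semi-prime ideal which is a finite intersection of minimal primes is intersected over \emph{all} its minimal primes. I do not expect any genuine obstacle beyond the need to split into the fixed-place and anti fixed-place cases separately.
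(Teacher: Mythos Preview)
Your argument is correct and follows essentially the same route as the paper, which simply records that the corollary ``follows immediately from Theorems~\ref{fixed-place and dt} and~\ref{d is infinite} and this fact that if $\Mi(R)$ is finite, then the zero ideal is a fixed-place ideal.'' You have merely unpacked the one step the paper leaves implicit, namely that when the zero ideal is fixed-place and $\B(R)$ is finite one must have $\Mi(R)\subseteq\B(R)$ (via prime avoidance against the finite intersection $\bigcap\B(R)=\{0\}$); this is precisely what is needed to close the loop, and your case split on fixed-place versus not fixed-place is the natural way to organise it.
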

\begin{proof}
	It follows immediately from Theorems \ref{fixed-place and dt} and \ref{d is infinite} and this fact that if $ \Mi(R) $ is finite, then the zero ideal is a fixed-place ideal.
\end{proof}

Finally in the following proposition we generalize \cite[Theorem 2.3]{nikandish2015domination} to the infinite version.

\begin{Pro}
	For each reduced ring $ R $, we have $ \dt_t(\Gamma(R)) \leqslant \dt_t(\AG(R)) $.
\end{Pro}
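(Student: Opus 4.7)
The plan is to show the inequality by transferring any total dominating set of $\AG(R)$ to a total dominating set of $\Gamma(R)$ of no larger cardinality. I may assume $\dt_t(\AG(R))$ is finite, since otherwise the inequality is trivial.

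First I would fix a total dominating set $D$ of $\AG(R)$ with $|D| = \dt_t(\AG(R))$. For each $I \in D$, I would pick any nonzero element $a_I \in I$; this element lies in $Z(R)^*$ because $I \in \A{R}$ gives $\An(I) \neq \{0\}$, and $\An(a_I) \supseteq \An(I) \neq \{0\}$. I then set $D' = \{a_I : I \in D\}$, which has cardinality at most $|D|$.

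Next I would verify that $D'$ is a total dominating set of $\Gamma(R)$. Given any $b \in Z(R)^*$, the principal ideal $\Ge{b}$ is a nonzero annihilating ideal, hence a vertex of $\AG(R)$. Because $D$ is a total dominating set, some $I \in D$ is adjacent to $\Ge{b}$ in $\AG(R)$, so $I\Ge{b} = \{0\}$, and in particular $a_I b = 0$. The reduced hypothesis is the essential point here: if $a_I = b$, then $a_I^2 = 0$ forces $a_I = 0$, contradicting the choice of $a_I$. So $a_I \neq b$, and the relation $a_I b = 0$ then means $a_I$ and $b$ are adjacent in $\Gamma(R)$.

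Consequently every vertex of $\Gamma(R)$ is adjacent to some element of $D'$, so $D'$ is a total dominating set and $\dt_t(\Gamma(R)) \leq |D'| \leq |D| = \dt_t(\AG(R))$. The only nontrivial point in the argument is the use of the reduced hypothesis to guarantee $a_I \neq b$; without it, $a_I$ might coincide with $b$ and fail to be an adjacent vertex. Everything else is bookkeeping on the assignment $I \mapsto a_I$.
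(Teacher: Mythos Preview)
Your proof is correct and follows essentially the same approach as the paper: choose a nonzero element $a_I$ from each ideal $I$ in a total dominating set of $\AG(R)$, and check that $\{a_I\}$ totally dominates $\Gamma(R)$ via the principal ideal $\Ge{b}$. You are in fact more careful than the paper, which omits the verification that $a_I \neq b$; your use of the reduced hypothesis to rule out $a_I = b$ is the right justification for this step. One small remark: the reduction to the finite case is unnecessary (and, if $\dt_t$ is read as a cardinal, not quite trivial in the infinite case), but your argument works verbatim for arbitrary cardinalities, so no harm is done.
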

\begin{proof}
	Suppose that $ D $ is a total dominating set of $ \dt_t(\AG(R)) $. So for each $ I‌ \in P $, there is some $ 0 \neq a_{_I} \in I $. For every $ a \in R $, there is some $ I ‌\in D $ such that $ I $ is adjacent to $ Ra $ in $ \AG(R) $, thus $ Ra I‌ = \{ 0 \} $, hence $ a a_{_I} = 0 $ and therefore $ a_{_I} $ is adjacent to $ a $ in $ \Gamma(R) $. Consequently, $ \{ a_{_I} : I‌ \in D \} $ is a  total dominating set of $ \Gamma(R) $ and this implies that $ \dt_t(\Gamma(R)) \leqslant \dt_t(\AG(R)) $.
\end{proof}

\end{document}